\numberwithin{equation}{section}
\newtheorem{theorem}{Theorem}[section]
\newtheorem{lemma}[theorem]{Lemma}
\newtheorem{proposition}[theorem]{Proposition}
\theoremstyle{definition}
\newtheorem{remark}[theorem]{Remark}
\newtheorem{example}[theorem]{Example}
\newcommand{\R}{\mathbb{R}}
\newcommand{\eps}{\varepsilon}
\newcommand{\ov}{\overline}
\newcommand{\be}{\begin{equation}}
\newcommand{\ee}{\end{equation}}
\newcommand{\sym}{sym}
\renewcommand{\fnum@figure}{Fig. \thefigure}
\newcommand{\cost}{\mathsf{c}}
\def\Id{{\rm Id}}
\def\tr{{\rm tr}}
\newcommand{\out}{\operatorname{out}}
\newcommand{\din}{\operatorname{in}}
\def\EE{\mathbb{E}}
\newcommand{\op}{\operatorname{op}}
\newcommand{\bra}[1]{\left( #1 \right)}
\newcommand{\sqa}[1]{\left[ #1 \right]}
\newcommand{\cur}[1]{\left\{ #1 \right\}}
\newcommand{\abs}[1]{\left| #1 \right|}
\newcommand{\nor}[1]{\left\| #1 \right\|}
\newcommand{\Lip}{\operatorname{Lip}}
\renewcommand{\sym}{\operatorname{sym}}
\newcommand{\cX}{\mathcal{X}}
\title[Functional LDP for wide Gaussian Lipschitz DNN]{Functional Large Deviations for Wide Deep Neural Networks with Gaussian Initialization and Lipschitz Activations}
\author[C. Macci]{Claudio Macci}
\address{C.M.: Dipartimento di Matematica, Università di Roma Tor Vergata, Via della Ricerca Scientifica, I-00133
Rome, Italy.}
\email{macci@mat.uniroma2.it}
\author[B. Pacchiarotti]{Barbara Pacchiarotti}
\address{B.P.: Dipartimento di Matematica, Università di Roma Tor Vergata, Via della Ricerca Scientifica, I-00133
Rome, Italy.}
\email{pacchiar@mat.uniroma2.it}
\author[K. Papagiannouli]{Katerina Papagiannouli}
\address{K.P.: Dipartimento di Matematica, Università degli Studi di Pisa, Largo Bruno Pontecorvo 5, I-56127 Pisa, Italy.}
\email{aikaterini.papagiannouli@unipi.it}
\author[G. L. Torrisi]{Giovanni Luca Torrisi}
\address{G.L.T.: Istituto per le Applicazioni del Calcolo, Consiglio Nazionale delle Ricerche, Via dei Taurini 19, 00185
Rome, Italy.}
\email{giovanniluca.torrisi@cnr.it}
\author[D. Trevisan]{Dario Trevisan}
\address{D.T.: Dipartimento di Matematica, Università degli Studi di Pisa, Largo Bruno Pontecorvo 5, I-56127 Pisa, Italy.
}
\email{dario.trevisan@unipi.it}
\subjclass[2010]{60F10, 60F05, 68T07}
\keywords{Neural Networks, Large Deviations, Cramér Theorem}
\begin{document}

\begin{abstract}
We establish a functional large deviation principle for fully connected multi-layer perceptrons with i.i.d.\ Gaussian weights (LeCun initialization) and general Lipschitz activation functions, including therefore the popular case of ReLU. The large deviation principle holds for the entire network output process on any compact input set. The proof combines exponential tightness for recursively defined processes, finite-dimensional large deviations, and 
the Dawson–Gärtner theorem, extending existing results beyond 
finite input sets and less general activations.
\end{abstract}

\maketitle
\section{Introduction}

Large deviation principles (LDPs) provide a natural framework to quantify non-typical behavior in large stochastic systems. In the context of random neural networks, recent years have seen substantial progress in understanding typical fluctuations, Gaussian limits, and mean-field descriptions in the large-width regime; see, among many others, \cite{neal_priors_1996,lee_deep_2018,matthews_gaussian_2018,hanin_random_2021,yang_wide_2019}. By contrast, the study of large deviations for deep neural networks is still in an early stage.

Recent works have established LDPs for observables of randomly initialized fully connected feedforward neural networks, also called multi-layer perceptrons (MLPs), focusing on Gaussian weights (LeCun initialization) and activation functions which exclude important nonlinearities used in practice, most notably the ReLU activation; see \cite{macci2024large} and \cite{andreis2025ldp}. This was partially overcome in \cite{vogel2025large}, where an LDP is stated for finite-dimensional distributions of MLPs with linear growth activations; however, a functional LDP, i.e., an LDP at the level of the entire network output process is still missing in this setting. The purpose of this work is to fill this gap. 

\subsection{Main result and outline of its proof}

The statement of our main result can be given as follows. We refer the reader to Section \ref{sec:notation} for notation and definitions.

\begin{theorem}\label{thm:main}
Fix a depth $L \ge 1$, input $d_{\din}$, and output  $d_{\out}\ge 1$ dimensions, Lipschitz activation functions $(\sigma^{(\ell)})_{\ell=1,\ldots, L}$, and consider a sequence of outputs $f^{(L)}_{(n)}: \R^{d_{\din}} \to \R^{d_{\out}}$ of a fully connected feedforward neural network (MLP) with hidden layer widths $n_\ell = n_\ell(n)$ such that 
$$\lim_{n\to \infty} \frac{n_\ell}{n} \in (0, \infty]  \quad \text{ for every $\ell=1, \ldots, L-1$,} $$
and i.i.d.\ Gaussian weights as in \eqref{eq:lecun} --  also known as LeCun initialization. 

Then, for every compact input set $\cX \subseteq \R^{d_{\din}}$, the rescaled output of the network
$$ x \in \cX \mapsto \frac{1}{\sqrt{n}} f^{(L)}_{(n)}(x), $$
seen as a random element on $C(\cX,\mathbb{R}^{d_{out}})$, endowed with the topology of uniform convergence,
satisfies the LDP with speed $n$ and a suitable rate function (see \eqref{eq:rate-functional}).
\end{theorem}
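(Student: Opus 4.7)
My plan is to follow the three-step recipe announced in the abstract: (i) establish a finite-dimensional LDP (FDD LDP) for $(f^{(L)}_{(n)}(x_1)/\sqrt n, \ldots, f^{(L)}_{(n)}(x_k)/\sqrt n)$ for every finite tuple $(x_1,\ldots,x_k) \in \cX^k$; (ii) apply the Dawson--G\"artner projective limit theorem to assemble these into an LDP on $(\R^{d_{\out}})^{\cX}$ endowed with pointwise convergence; (iii) prove exponential tightness of the laws of $x \mapsto f^{(L)}_{(n)}(x)/\sqrt n$ in $C(\cX,\R^{d_{\out}})$ with the uniform topology, thus upgrading the projective-limit LDP to the desired functional LDP.

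For step (i) I would proceed by induction on the depth $\ell$. Conditionally on the previous layer, the $k$ values of layer $\ell$ at the chosen inputs form a sum of $n_{\ell-1}$ i.i.d.\ Gaussian vectors in $\R^{k \cdot n_\ell}$, so conditional Cram\'er applies directly, and a standard conditioning/contraction argument combines the inductive LDP with the conditional one to produce a variational rate function at layer $\ell$. This is essentially the FDD LDP of \cite{vogel2025large}, which I would invoke rather than reproduce in full.

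Step (iii) is where the genuinely new work is, and I expect it to be the main obstacle. The key input is Gaussian concentration for operator norms: under LeCun scaling $\sqrt{n_{\ell-1}}\,W^{(\ell)}$ is a standard Gaussian matrix, so Gaussian Lipschitz concentration yields $\PP(\|W^{(\ell)}\|_{\op} > M) \le e^{-c_\ell(M)\, n}$ with $c_\ell(M)\to\infty$ as $M\to\infty$. Since $f^{(L)}_{(n)}$ is a composition of Lipschitz activations and linear maps, $\Lip(f^{(L)}_{(n)}) \le \prod_{\ell} \Lip(\sigma^{(\ell)})\, \|W^{(\ell)}\|_{\op}$, and a union bound yields $\PP(\Lip(f^{(L)}_{(n)}) > M) \le e^{-a n}$ for $M$ large enough depending on $a$. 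A parallel induction controls $\sup_{x\in \cX} |f^{(L)}_{(n)}(x)|$ exponentially, using that $\cX$ is bounded and combining the same operator-norm tails with an exponential bound on $|f^{(L)}_{(n)}(x_0)|$ at a fixed anchor $x_0 \in \cX$. By Arzel\`a--Ascoli, the sets $K_a := \{g \in C(\cX,\R^{d_{\out}}) : \|g\|_\infty \le R_a,\ \Lip(g) \le L_a\}$ are compact in the uniform topology, and the above estimates give $\limsup_n \tfrac{1}{n}\log \PP(f^{(L)}_{(n)}/\sqrt n \notin K_a) \le -a$.

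To conclude, Dawson--G\"artner applied to the projective system of FDD rate functions yields an LDP for $f^{(L)}_{(n)}/\sqrt n$ on $(\R^{d_{\out}})^{\cX}$ with the pointwise topology and rate function $I(g) = \sup\{ J_{x_1,\ldots,x_k}(g(x_1),\ldots,g(x_k)) : k \ge 1,\ x_i \in \cX\}$. Since the canonical inclusion $C(\cX,\R^{d_{\out}}) \hookrightarrow (\R^{d_{\out}})^{\cX}$ is continuous and the laws sit a.s.\ on continuous functions, the exponential tightness from (iii) promotes this to an LDP in the uniform topology via a standard transfer argument, with the same rate function restricted to $C(\cX,\R^{d_{\out}})$; this matches the candidate \eqref{eq:rate-functional}. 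The remaining subtle point is verifying that the projective supremum is a \emph{good} rate function on $C(\cX,\R^{d_{\out}})$, which follows from lower semicontinuity of the FDD rates together with the exponential tightness already established.
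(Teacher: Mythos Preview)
Your high-level architecture---finite-dimensional LDP, exponential tightness, Dawson--G\"artner lift---is exactly the paper's. The genuine gap is in step (i).

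You propose to invoke \cite{vogel2025large} for the FDD LDP rather than reprove it. The paper explicitly identifies a gap in Vogel's argument (see the footnote in the introduction: implication (2.11) in his Lemma~2.2 fails, e.g., for $\sigma(x)=\max\{0,x\}+\eta$ with $\eta\neq 0$), and filling it is one of the paper's main technical contributions. Concretely, the ``standard conditioning/contraction argument'' you allude to is Chaganty's lemma (Lemma~\ref{lem:chaganty}), and its hypothesis---the LDP continuity condition---demands that the conditional rate $J^\sigma(v\,|\,q)$ be \emph{jointly} lower semicontinuous in $(v,q)$ and that the conditional LDP hold along every sequence $q_n\to q$. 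Neither is automatic for merely Lipschitz $\sigma$. The paper establishes both in Lemma~\ref{lem:cramer-best} by showing that the conditional log-MGFs $\Lambda^\sigma(\cdot\,|\,q_n)$ epi-converge to $\Lambda^\sigma(\cdot\,|\,q)$ and then invoking the Legendre--Fenchel epi-continuity theorem (Theorem~\ref{thm:wijsman}); this is the new idea, and without it the induction does not close. A related imprecision: conditionally on layer $\ell-1$, Cram\'er's theorem is not applied to $f^{(\ell)}(\cX)$ (which is simply Gaussian) but to the post-activation \emph{kernel} $v^{(\ell)}(\cX)=\frac{1}{n_\ell}\sum_i \sigma(f^{(\ell)}_i(\cX))^{\otimes 2}$, an empirical mean of $n_\ell$ i.i.d.\ matrices; the LDP propagates through the kernels (Proposition~\ref{prop:ldp-kernel}), and only the last step uses the Gaussian conditional law of the output given $v^{(L-1)}$ (Example~\ref{ex:gaussian-chaganty}).

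Your exponential tightness route via Gaussian concentration for $\|W^{(\ell)}\|_{\op}$ is a genuinely different argument from the paper's, which instead verifies a Schied-type moment criterion (Theorem~\ref{thm:schied}) by inductively bounding $\EE\big[(1+\frac{1}{n_\ell}\sum_i |f^{(\ell)}_i(x)-f^{(\ell)}_i(x')|^2/|x-x'|^2)^{\gamma n}\big]$ through conditional Gaussianity and a $\chi^2$-moment lemma (Lemma~\ref{lem:gamma}), followed by a quantitative Kolmogorov continuity theorem. Your approach is plausible and arguably more direct; just note that $\|W^{(1)}\|_{\op}\asymp\sqrt{n}$ (since $n_0=d_{\din}$ is fixed), so the product scaling needs care, and the anchor-point bound on $|f^{(L)}_{(n)}(x_0)|/\sqrt{n}$ still forces you to control the kernels $v^{(\ell)}(x_0,x_0)$ inductively---so you do not entirely escape the layerwise recursion the paper uses.
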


The proof relies on an inductive argument on the number of layers, combining Cram\'er's theorem for empirical means of i.i.d.\ random variables with a careful analysis of the continuity properties of the neural network architecture. The high-level structure is somewhat analogous to that of the functional CLT for MLPs presented in \cite{bracale_large-width_2020}, of course, with modifications to account for the different nature of large deviations. In brief:
\begin{enumerate}
\item[(i)] In Section~\ref{sec:exp-tight} we establish exponential tightness of the sequence $(\frac{1}{\sqrt{n}} f^{(L)}_{(n)}(\cdot))_{n\geq 1}$ on $C(\cX,\mathbb{R}^{d_{out}})$, endowed with the uniform topology.
In particular, we use a variant of Schied's criterion \cite{schied1994criteria,schied1997moderate}, Theorem~\ref{thm:schied}, tailored to the layer-by-layer propagation of randomness and taking into account the sub-linear growth of activation functions. 

\item[(ii)] In Section \ref{sec:ldp-finite} we first prove that for a fixed finite set of inputs, the LDP for the corresponding finite-dimensional network outputs holds. This step relies on Chaganty's lemma \cite{chaganty1997large}, combined with exponential equivalence arguments, and Cram\'er's theorem under local exponential moment assumptions. The overall strategy is close in spirit to the recent contribution by Vogel \cite{vogel2025large}, although our argument crucially relies on epi-convergence techniques from convex analysis.\footnote{Precisely, we exploit epi-convergence techniques to establish the joint lower semi-continuity of a suitable function, filling a gap in Vogel's argument contained in \cite[Lemma 2.2]{vogel2025large}, and precisely implication (2.11) therein. To argue that this implication can not hold in general, one considers e.g. the scalar (one-dimensional) case, $\sigma(x)=\max\{0,x\}+\eta$, $x\in\mathbb R$, $\eta\neq 0$, $q=0$ and $q_n>0$ for every $n$. Then, for any fixed $\varepsilon>0$ and $n$, for $\eta$ sufficiently small the inequality (2.11) in \cite{vogel2025large} is false on an event of positive probability.} 

\item[(iii)] To conclude (see the argument after the proof of Proposition \ref{prop:ldp-kernel}), we lift the finite-input LDPs to an LDP for the entire process by means of a standard application of the Dawson--G\"artner theorem and the inverse contraction principle.

\end{enumerate}

\subsection{Relation to the literature.}

The work of three of the authors of this paper \cite{macci2024large}, primarily established an LDP for the output of MLPs with bounded activations and finite input sets. Functional
LDPs for MLPs obtained so far have been limited to settings with Gaussian weights and growth assumptions on the activation function, which exclude the ReLU \cite{andreis2025ldp}. The recent work \cite{vogel2025large} addresses large deviations for Gaussian networks with ReLU activation, but focuses on a finite set of inputs. To the best of our knowledge, a functional LDP for deep neural networks with general Lipschitz activations has not been previously established.

\subsection{Perspectives}

The results of this work suggest several natural directions for future research. First, the recursive  structure of rate functions is compatible with more general architectures and could potentially be combined with the tensor-program framework of \cite{yang_wide_2019} to treat convolutional, residual, or attention-based networks.  

Second, the probabilistic techniques developed in this paper provide a natural starting point for the study of large deviations during training, where the dynamics of the parameters introduce additional temporal dependence and non-equilibrium effects. Establishing LDPs for training dynamics remains a largely open problem; in this context, some specific aspects, in the case of one-hidden-layer neural networks, are investigated in \cite{HirschWillhalm}.

\section{Notation and basic facts} \label{sec:notation}

\subsection{General notation}\label{sec:gennot}

Positive constants are generally denoted by $\cost$, and with $\cost(\ldots)$ to indicate dependence on parameters; their value may occasionally change from line to line. We use the measure-theoretic convention that $\infty \cdot 0 = 0$, denote by $\|\cdot\|_{L^p}$ the usual Lebesgue norm with respect to a probability measure (which will be clear from the context) and by $\overset{\it{law}}{=}$ the equality in law between random variables.

Given $d \ge 1$, we write $\R^d$ for the $d$-dimensional Euclidean space, with norm denoted by $|\cdot|$. Given a matrix $A \in \R^{m \times n}$, we denote by $A^\top$ its transpose and by $\nor{A}_{\op}$ its operator norm (induced by the Euclidean norms on $\R^n$ and $\R^m$); symmetric $d\times d$ matrices are denoted by $\R^{d \times d}_{\sym}$ and positive semidefinite (symmetric) matrices by $\R^{d \times d}_{\succeq 0}$. Every $q \in \R^{d\times d}_{\succeq 0}$ admits a unique (non-negative) square root, i.e., $\sqrt{q} \in \R^{d \times d}_{\succeq 0}$ such that $q = (\sqrt{q})^2$. Its construction uses the spectral theorem, reducing to the simpler case of diagonal matrices. The map $q \mapsto \sqrt{q}$ is H\"older continuous with exponent $1/2$ (see e.g.\ \cite{van1980inequality}),  i.e., 
$$ \nor{\sqrt{q} - \sqrt{q'} }_{\op} \le \sqrt{ \nor{q -q'}_{\op} }.$$
If  $q\in \R^{d\times d}_{\succeq 0}$ is invertible, we write $q^{-1}$ for its inverse, which induces the quadratic form $z^\top q^{-1} z$, for $z \in \R^d$. We extend this form to general matrices $q\in \R^{d\times d}_{\succeq 0}$ in the following way: if $z=qx$ belongs to the image of $q$, then we define $\nor{z}_q^2 := z^\top x$, otherwise we set $\nor{z}_q^2 = \infty$. Such a function clearly extends $z^{\top}q^{-1} z$ when $q$ is not invertible and takes values in $[0, \infty]$. This quadratic form coincides with the squared norm of the reproducing kernel Hilbert space (Cameron--Martin space) associated with the Gaussian law $\mathcal N(0,q)$, extended by $+\infty$ outside $\mathrm{Im}(q)$, see e.g. the monograph \cite{Bogachev1998}. We prove in Remark~\ref{rem:zqz-lsc} that $(q,z) \mapsto \nor{z}_q^2$ is jointly lower semicontinuous. 

Given $A \in \R^{d \times d}$, we denote by $\tr(A) = \sum_{i=1}^{d} A_{ii}$ its trace. The trace induces the Hilbert-Schmidt scalar product on $\R^{d \times d}$, defined as $(A,B)\mapsto \tr(A^\top B)$, and the norm $\nor{A} = \sqrt{ \tr(A^\top A)}$, which we use in particular to identify $\R^{k \times k}_{\sym}$ with $\R^{k(k-1)/2}$. Given $v, w \in \R^d$, we write $v\otimes w = v w^\top \in \R^{d\times d}$ for the rank-one matrix $(v\otimes w)_{ij}= v_i w_j$, for which $\nor{v\otimes w}_{\op} = \nor{v \otimes w} =\abs{v} \abs{w}$. We also write for brevity $v^{\otimes 2} = v \otimes v \in \R^{d\times d}_{\sym}$.

Lipschitz continuous functions $\sigma: \R \to \R$ are such that, for some constant $\Lip(\sigma)$,
$$ |\sigma(x) - \sigma(y)| \le \Lip(\sigma) |x-y|, \quad \text{for every $x$, $y \in \R$,} $$
and are functions with at most linear growth: we set
$$ \nor{\sigma}_{\Lip} := \abs{\sigma(0)} + \Lip(\sigma),$$
so that $\abs{\sigma(x)} \le \nor{\sigma}_{\Lip}(1 + \abs{x})$ for every $x \in \R$.

\subsection{Convex analysis} For later purposes we recall some basic facts from convex analysis; we refer to \cite{rockafellar1998variational} for details. Given a function $f: \R^d \to [-\infty, \infty]$, its convex conjugate (or Legendre-Fenchel transform) $f^*: \R^d \to [-\infty, \infty]$ is defined by 
$$ f^*(v) := \sup_{x \in \R^d} \bra{ v^\top x - f(x) }, \quad \text{for every $v \in \R^d$.}$$
A function is said to be proper if it is not identically equal to $+\infty$ and never attains the value $-\infty$. If $f$ is proper, convex and lower semicontinuous, then $f^*$ is also proper, convex, and lower semicontinuous.

A useful notion of convergence to deal with sequences of proper, convex, and lower semicontinuous functions is given by epi-convergence: $(f_n)_{n=1}^\infty$ are said to epi-converge to $f: \R^d \to [-\infty, \infty]$ if the following two conditions hold:
\begin{equation}\label{eq:liminf}
 \liminf_{n \to \infty} f_n(x_n) \ge f(x), \quad \text{for every $x \in \R^d$ and every sequence $(x_n)_{n}$ such that $x_n \to x$,}
 \end{equation}
 and
 \begin{equation}\label{eq:limsup}
  \limsup_{n \to \infty} f_n(x_n) \le f(x), \quad \text{for every $x \in \R^d$ and some sequence $(x_n)_{n}$ such that $x_n \to x$.}
  \end{equation}
A key result that is useful for our purposes is the following, see \cite[11.34 Theorem]{rockafellar1998variational}.

\begin{theorem}[epi-continuity of Legendre-Fenchel transform]\label{thm:wijsman}
Let $(f_n)_{n}$, $f$ be proper, lower semicontinuous and convex functions on $\R^d$. One has that $(f_n)_n$ epi-converges to $f$ if and only if  $(f_n^*)_n$ epi-converges to $f^*$.
\end{theorem}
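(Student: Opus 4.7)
The plan is to exploit the involution of the Legendre--Fenchel transform on proper, lsc, convex functions, $f^{**} = f$ (Fenchel--Moreau). This reduces the biconditional to a single implication: if $(f_n)_n$ epi-converges to $f$, then $(f_n^*)_n$ epi-converges to $f^*$. Applying the same statement to the sequence of conjugates then yields the reverse direction automatically.

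For the liminf analogue of~\eqref{eq:liminf} for the conjugates, I fix $v \in \R^d$ and an arbitrary sequence $v_n \to v$. For each $x \in \R^d$, condition~\eqref{eq:limsup} for $(f_n)_n$ produces $x_n \to x$ with $\limsup_n f_n(x_n) \le f(x)$. The elementary bound $f_n^*(v_n) \ge v_n^\top x_n - f_n(x_n)$, together with $v_n^\top x_n \to v^\top x$ and the choice of $(x_n)_n$, gives $\liminf_n f_n^*(v_n) \ge v^\top x - f(x)$; taking the supremum over $x$ yields $\liminf_n f_n^*(v_n) \ge f^*(v)$. This step is essentially formal.

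The limsup analogue of~\eqref{eq:limsup} for the conjugates, i.e., the construction of a recovery sequence $v_n \to v$ with $\limsup_n f_n^*(v_n) \le f^*(v)$, is where the real work lies. Assuming $f^*(v) < \infty$ (otherwise the statement is trivial), the natural tool is the Moreau--Yosida regularization $e_\lambda g(x) := \inf_y \bra{ g(y) + \frac{1}{2\lambda}\abs{x-y}^2 }$, $\lambda > 0$, which is finite-valued, convex and continuous on $\R^d$, and, being an infimal convolution with a quadratic, satisfies the dual identity $(e_\lambda g)^*(v) = g^*(v) + \frac{\lambda}{2}\abs{v}^2$. A short argument based on \eqref{eq:liminf}--\eqref{eq:limsup} for $(f_n)_n$, combined with the coercivity of the quadratic term, shows that for each fixed $\lambda>0$ the envelopes $e_\lambda f_n$ converge pointwise (hence, being finite-valued convex, locally uniformly) to $e_\lambda f$. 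Coupled with $(e_\lambda f)^*(v) = f^*(v) + \frac{\lambda}{2}\abs{v}^2 \downarrow f^*(v)$ as $\lambda \downarrow 0$, a diagonal extraction $\lambda = \lambda_n \downarrow 0$ then produces the sought recovery sequence.

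The main obstacle is precisely this diagonal extraction. The convergence of Moreau envelopes is per-$\lambda$, whereas the approximation $e_\lambda f \nearrow f$ (or dually $(e_\lambda f)^* \downarrow f^*$) is per-$n$, and matching the two limits requires quantitative control of the regularization rate, or an auxiliary Wijsman-type result for finite-valued convex functions. A structurally cleaner route I would invoke if the diagonal argument becomes delicate is Attouch's theorem: for proper lsc convex functions on $\R^d$, epi-convergence is equivalent to Painlev\'e--Kuratowski convergence of the graphs of the subdifferentials, and since $\partial f^* = (\partial f)^{-1}$ and graph inversion trivially preserves this convergence, the conclusion follows at once, at the cost of treating Attouch's result as a black box. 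Either way, the liminf half and the involution reduction remain entirely formal, and the nontrivial content of the theorem is concentrated in the construction of the recovery sequence.
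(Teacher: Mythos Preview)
The paper does not prove this theorem: it simply cites it as \cite[11.34 Theorem]{rockafellar1998variational} and uses it as a black box. There is therefore no proof in the paper to compare your proposal against.

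That said, your sketch is a reasonable outline of one standard route to Wijsman's theorem. The reduction via Fenchel--Moreau and the liminf half are indeed formal and correct. For the limsup half, your Moreau--Yosida strategy is viable (this is essentially Attouch's approach), and you are right to flag the diagonal extraction as the delicate point; it does go through, but requires showing that for each fixed $\lambda$ the envelopes $e_\lambda f_n$ epi-converge to $e_\lambda f$ with some uniformity, or invoking the fact that epi-convergence of finite-valued convex functions is equivalent to local uniform convergence. Your fallback via Attouch's theorem on subdifferential graphs is also valid but, as you note, trades one nontrivial result for another of comparable depth. If you want a self-contained argument closer in spirit to the reference the paper cites, the proof in Rockafellar--Wets proceeds via the geometric characterization of epi-convergence as set convergence of epigraphs together with the polarity correspondence between epigraphs and the epigraphs of conjugates.
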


\begin{remark}\label{rem:zqz-lsc}
Using Theorem \ref{thm:wijsman}, it is possible to argue that the function 
$$ \R^d \times \R^{d\times d}_{\succeq 0} \ni (z,q) \mapsto \nor{z}_q^2 \in[0, \infty]$$
(defined in the extended sense introduced in Section \ref{sec:gennot} if $q$ is not invertible) is lower semicontinuous. Given a sequence $((z_n, q_n))_{n=1}^{\infty}\subset \R^d \times \R^{d\times d}_{\succeq 0} $ converging to $(z,q)\in\R^d \times \R^{d\times d}_{\succeq 0}$, consider the proper, convex, and continuous functions $f_n(x) := x^\top q_n x$, for $x \in \R^{d}$, which clearly epi-converges to $f(x) =  x^\top q x$. Then, it holds $f^*_n(z) = \nor{z}_{q_n}^2\in[0, \infty]$, and so by the above epi-continuity theorem of the convex conjugate,  $(f^*_n)_{n=1}^{\infty}$ epi-converges to $f^*(z) = \nor{z}_{q}^2$ and by  \eqref{eq:liminf} we obtain
$$  \nor{z}_{q}^2 \le \liminf_{n\to \infty} \nor{z_n}_{q_n}^2.$$
\end{remark}

\subsection{Neural networks}
An MLP consists of a sequence of hidden layers stacked between the input and output layers, where each node (i.e. component of the vector) in a layer is connected with all nodes in the subsequent layer. Let $L \ge 1$ denote the total number of layers, excluding the input one, let $d_{\din}$ denote the dimension of the input space,
$$ \bm{n} = (d_{\din} = n_0, n_1, \ldots, n_L=d_{\out})$$
denote the size of the layers, so that network output is a vector in $\R^{d_{\out}}$. The construction goes as follows: we fix a family of functions (called activation functions)
$$ \bm{\sigma} = (\sigma^{(1)}, \sigma^{(2)},\ldots, \sigma^{(L)} ) \quad \text{with} \quad \sigma^{(\ell)}: \R \to \R,$$
and two  sets of parameters (called, respectively, weights and biases),
$$\bm{W} = (W^{(1)}, W^{(2)}, \ldots, W^{(L)}), \quad  \bm{b} = (b^{(1)}, b^{(1)}, \ldots, b^{(L)}),$$
where, for every $\ell =1, \ldots, L$, one has $W^{(\ell)} \in \R^{n_{\ell} \times n_{\ell-1}}$ and $b^{(\ell)} \in \R^{n_{\ell}}$. We set $n_0:= d_{\din}$ the input dimension,
$$f^{(0)} : \R^{d_{\din}} \to \R^{n_0}, \quad f^{(0)}(x) = x,$$
and recursively define, for $\ell=1, \ldots, L$,
\begin{equation}\label{eq:fully-connected} f^{(\ell)} : \R^{d_{\din}} \to \R^{n_{\ell}}, \quad f^{(\ell)}(x) = W^{(\ell)}\sigma^{(\ell)} (f^{(\ell-1)}(x)) + b^{(\ell)},
 \end{equation}
where  $n_L := d_{\out}$ is the output dimension. The composition with $\sigma^{(\ell)}$ is always to be understood componentwise, i.e.,
$$ \sigma^{(\ell)} (f^{(\ell-1)}(x))_i := \sigma^{(\ell)}( f^{(\ell-1)}_i(x)).$$
Usually, one lets $\sigma^{(1)}(x) = x$, the identity function.  Moreover, we always consider activation functions  that are Lipschitz continuous, and popular choices for $\sigma^{(\ell)}$ include the ReLU function $x\mapsto \max\cur{x, 0}$, and the sigmoid (or logistic) function $x \mapsto (1+ e^{-x})^{-1}$. Furthermore, to keep annoying technicalities at bay, we often assume in the proofs that all activation functions are the same, i.e., $\sigma^{(\ell)} = \sigma$ for every $\ell=1, \ldots, L$ and  we only focus on the case of identically null biases, $b^{(\ell)} = 0$ for every $\ell$.
An MLP is naturally intended as a parametric family of functions $f(\cdot; \theta): \R^{d_{\din}} \to \R^{d_{\out}}$, where the parameters are given by the weights and biases. In particular, given an input $x \in \R^{d_{\din}}$, the output of the network is given by $f^{(L)}(x) \in \R^{d_{\out}}$. One can also think of a network as a sequence of parametrized feature maps, yielding kernels for each layer. A particular role in the derivation of the LDP is played by the \emph{post-activation} kernels

\begin{equation}\label{eq:v-def} v^{(\ell)}(x,y) := \frac 1 {n_\ell} \sum_{i=1}^{n_\ell} \sigma(f^{(\ell)}_i(x)) \sigma(f^{(\ell)}_i(y)),\quad\text{$\ell=1,\ldots,L.$}
\end{equation}
For a (finite) input set $\cX=\cur{x_i}_{i=1, \ldots, k} \subseteq \R^{d_{\din}}$, we introduce the notation  
$$ f^{(\ell)}(\cX) := \bra{ f^{(\ell)}_i(x_j)}_{i=1, \ldots, n_\ell; j=1, \ldots, k} \in \R^{n_\ell \times k},$$
and
$$ v^{(\ell)}(\cX) := \bra{ v^{(\ell)}(x_i, x_j)}_{i,j=1, \ldots,k} \in \R^{k \times k}_{\succeq 0}.$$

Neural network's weights are usually initialized at random, and a popular choice is given by the so-called LeCun initialization \cite{lecun2002efficient}, which consists in choosing weights and biases as independent Gaussian random variables, centered with variances proportional to the inverse of the size of the previous layer, i.e.,
\begin{equation}\label{eq:lecun} \EE\sqa{ (W^{(\ell)}_{i,j})^2} = \frac 1 {n_{\ell-1}} \quad \text{for every $\ell=1, \ldots, L$ and every $i$, $j$.} \end{equation}

\begin{remark}
Biases in LeCun initialization are instead usually taken centered with unit variance, i.e.,
$$\EE\sqa{ (b^{(\ell)}_{i})^2} = 1 \quad \text{for every $\ell=1, \ldots, L$ and every $i$,}$$
but for the sake of simplicity, we will assume instead in this work that biases are identically null.
\end{remark}

\begin{remark}\label{rem:conditional-gaussian}
A key structural property of MLPs with Gaussian weights is the
following conditional Gaussianity. Fix a finite input set $\cX=\{x_i\}_{i=1, \ldots, k}$ and
a layer $\ell\ge1$. Then, conditionally on the previous layer outputs
$f^{(\ell-1)}(\cX)$, the vectors
\[
\bigl(f^{(\ell)}_i(\cX)\bigr)_{i=1}^{n_\ell}:=\bigl((f^{(\ell)}_i(x_1),\ldots,f^{(\ell)}_i(x_k))\bigr)_{i=1}^{n_\ell}
\]
are i.i.d.\ centered Gaussian random vectors in $\R^k$, each with covariance matrix
$v^{(\ell-1)}(\cX)$. This follows from the fact that \eqref{eq:fully-connected} defines a linear transformation of the weights $W^{(\ell)}$, with (conditionally) constant coefficients. In particular, the conditional law of
$\bigl(f^{(\ell)}_i(\cX)\bigr)_{i=1}^{n_\ell}$ given $f^{(\ell-1)}(\cX)$ depends on the
previous layer only through the post-activation kernel $v^{(\ell-1)}(\cX)$. As a consequence, given $f^{(\ell-1)}(\cX)$, one has
\[
v^{(\ell)}(\cX)
\;\stackrel{\mathrm{law}}{=}\;
\frac{1}{n_\ell}\sum_{i=1}^{n_\ell}
\sigma\!\left(\sqrt{v^{(\ell-1)}(\cX)}\,N_i\right)^{\otimes2},
\]
where $(N_i)_{i\ge1}$ are i.i.d.\ standard Gaussian vectors in $\R^k$, independent of
$v^{(\ell-1)}(\cX)$. This observation is the starting point for the inductive large
deviation analysis of the kernel sequence, see the proof of Proposition \ref{prop:ldp-kernel}.
\end{remark}

\subsection{Large deviations}

We recall some basic facts about the theory of large deviations, referring to monographs such as \cite{dembo2009large, rassoul2015course} for details. A sequence of random variables $(X_n)_{n=1}^\infty$ taking values in a Polish space $\mathsf X$ is said to satisfy the large deviation principle (LDP) with speed $(a_n)_{n=1}^\infty \subseteq (0, \infty)$ and rate function $I: \mathsf X \to [0, \infty]$ if $a_n \uparrow \infty$,  $I$ has compact sub-level sets \footnote{Throughout this paper we choose to adopt a slightly simplified terminology, since a rate function with compact sub-level sets is usually called good rate function in standard textbooks on large deviations theory.}, and for every Borel set $B \subseteq \mathsf X$ it holds
$$ - \inf_{x \in B^\circ} I(x) \le \liminf_{n \to \infty} \frac 1 {a_n} \log P(X_n \in B) \le \limsup_{n \to \infty} \frac 1 {a_n} \log P(X_n \in B) \le - \inf_{x \in \ov B} I(x).$$
The above formulation is reminiscent of the classical portmanteau theorem of convergence in law, and indeed large deviations provide a natural extension of such convergence to rare events, i.e., events whose probability decays exponentially fast in $n$. Indeed, often writes the above inequalities informally as
$$ P(X_n \in B) \approx \exp\bra{- a_n \inf_{x \in B} I(x) }.$$
\begin{remark}\label{re:22012026}
Note that from the definition of LDP, if $(X_n)_{n=1}^{\infty}$ satisfies the LDP with speed $(a_n)_{n=1}^\infty$ and rate function $I$, and for another sequence $(a'_n)_{n=1}^\infty$ it holds $\lim_{n}a_n/a_n' = \beta\in (0, \infty)$, then $(X_n)_{n=1}^\infty$ satisfies the LDP with speed $(a'_n)_{n=1}^\infty$ and rate function $x\mapsto \beta I(x)$; actually, this holds true also if  $\beta=\infty$, provided that $I$ has a unique zero, as shown in \cite[Lemma 1]{giulianomaccipacchiarottiJAP2024}. This will be precisely used in Proposition \ref{prop:ldp-kernel}. 

Furthermore, if $X_n=x_0$ is a constant and deterministic sequence, then the LDP trivially holds with any speed and rate function $I(x) = \infty 1_{\cur{x\neq x_0}}$, i.e.,\ it equals $+\infty$ if $x \neq x_0$ and $0$ if $x=x_0$.
\end{remark}

Hereafter, we collect some results from the general theory of large deviations that will be used throughout this work.
We begin with stating the contraction principle, see \cite[Theorem 4.2.1]{dembo2009large} for a proof.

\begin{proposition}[Contraction principle]
Let $(X_n)_{n=1}^\infty$ be a sequence of random variables with values in a Polish space $\mathsf X$ satisfying the LDP with speed $(a_n)_{n=1}^\infty$ and rate function $I: \mathsf X \to [0, \infty]$, and let $F: \mathsf X \to \mathsf Y$ be a continuous map to another Polish space $\mathsf Y$. Then, the sequence $(F(X_n))_{n=1}^\infty$ satisfies the LDP with speed $(a_n)_{n=1}^\infty$ and rate function
$$ J(y) := \inf_{x \in F^{-1}(\cur{y})} I(x), \quad \text{for $y \in \mathsf Y$.}$$
\end{proposition}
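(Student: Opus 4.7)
The plan is to verify separately the two standard requirements for an LDP for $(F(X_n))_n$: (a) that $J$ is a rate function with compact sub-level sets, and (b) that the large deviation upper and lower bounds hold. Both parts rely only on the continuity of $F$ together with the properties of $I$, and follow classical lines (see e.g.\ \cite{dembo2009large}).

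First I would establish compactness of sub-level sets of $J$. For $\alpha \ge 0$, I claim the identity
$$ \cur{y \in \mathsf Y : J(y) \le \alpha} = F\bra{\cur{x \in \mathsf X : I(x) \le \alpha}}. $$
The inclusion $\supseteq$ is immediate from the definition of $J$. For $\subseteq$, if $J(y) \le \alpha$, then for each $k \ge 1$ there exists $x_k \in F^{-1}(\cur{y})$ with $I(x_k) \le \alpha + 1/k$; the sequence $(x_k)_k$ lies in the compact sub-level set $\cur{I \le \alpha + 1}$, so a subsequence converges to some $x_\infty$, and continuity of $F$ gives $F(x_\infty)=y$ while lower semicontinuity of $I$ gives $I(x_\infty) \le \alpha$. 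Thus each sub-level set of $J$ is the image of a compact set under a continuous map, hence compact (and in particular $J$ is lower semicontinuous).

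For the LDP bounds themselves, fix a Borel set $B \subseteq \mathsf Y$ and use $P(F(X_n) \in B) = P(X_n \in F^{-1}(B))$. Since $F$ is continuous, $F^{-1}(B^\circ)$ is open in $\mathsf X$ and $F^{-1}(\ov B)$ is closed, with $F^{-1}(B^\circ) \subseteq (F^{-1}(B))^\circ$ and $\ov{F^{-1}(B)} \subseteq F^{-1}(\ov B)$. Applying the LDP for $(X_n)_n$ on these two sets yields
$$ \liminf_n \tfrac{1}{a_n} \log P(F(X_n) \in B) \ge - \inf_{x \in F^{-1}(B^\circ)} I(x), $$
$$ \limsup_n \tfrac{1}{a_n} \log P(F(X_n) \in B) \le - \inf_{x \in F^{-1}(\ov B)} I(x). $$
The trivial set-theoretic identity $\inf_{x \in F^{-1}(A)} I(x) = \inf_{y \in A} J(y)$, valid for any $A \subseteq \mathsf Y$ directly from the definition of $J$, converts these into the desired bounds $-\inf_{y \in B^\circ} J(y)$ and $-\inf_{y \in \ov B} J(y)$.

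The only genuine (though mild) obstacle is the compactness argument for $J$: one must resist the temptation to assert $\cur{J \le \alpha} = F(\cur{I \le \alpha})$ purely by definition, since the infimum in $J(y)$ is not a priori attained, and it is precisely the compactness of the sub-level sets of $I$, together with the lower semicontinuity of $I$ and continuity of $F$, that validates the passage from an infimizing sequence to a genuine minimizer. Everything else is routine manipulation of open and closed sets under $F^{-1}$ combined with the LDP already known for $(X_n)_n$.
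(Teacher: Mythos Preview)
Your argument is correct and follows the standard route. Note, however, that the paper does not give its own proof of the contraction principle: it merely states the result and refers the reader to \cite[Theorem 4.2.1]{dembo2009large}. Your write-up is essentially the classical proof found there, so there is nothing to compare.
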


Next, we recall Cramér's theorem, see \cite[Corollary 6.16]{dembo2009large} or \cite[Section 4.3]{rassoul2015course} for proofs. Let $X$ be a random variable with values in $\mathbb R^d$. Hereafter, we denote by $M(\lambda) := \EE\sqa{ \exp\bra{ \lambda^\top X }}$, $\lambda\in\mathbb R^d$, the moment generating function (MGF) of $X$.

\begin{theorem}[Cramér's theorem]
Let $(X_n)_{n=1}^\infty$ be i.i.d.\ random variables with values in $\R^d$ and set $\bar{X}_n = \frac 1 n \sum_{i=1}^nX_i$. If $M(\lambda)$ is finite in a neighborhood of $\lambda = 0$, then $\bar{X}_n$ satisfies the LDP with speed $n$ and rate function
$$ I(x) = \sup_{\lambda \in \R^d} \bra{ \lambda^\top x - \log M(\lambda) }.$$
\end{theorem}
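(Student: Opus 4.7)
The plan is to prove the upper and lower large deviation bounds separately, following the classical Chernoff / exponential-tilt strategy, and then to verify that the rate function $I$ has compact sub-level sets.

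For the upper bound, I would begin with the Chernoff estimate: Markov's inequality applied to $\exp(n\lambda^\top \bar X_n)$ gives, for every $\lambda \in \R^d$ and every half-space $\{y : \lambda^\top y \ge c\}$,
\[
\frac{1}{n}\log P(\lambda^\top \bar X_n \ge c) \le -c + \log M(\lambda),
\]
by independence and the identity $\EE[\exp(n\lambda^\top \bar X_n)] = M(\lambda)^n$. For a compact set $K\subseteq \R^d$, I would cover $K$ by finitely many small balls, estimate $P(\bar X_n \in B(x,\varepsilon))$ by optimizing the previous inequality over $\lambda$ (restricted to the effective domain of $\log M$), and then send $\varepsilon\to 0$ using the lower semicontinuity and convexity of $I = (\log M)^*$ to conclude $\limsup_n \tfrac1n \log P(\bar X_n \in K) \le -\inf_{x\in K}I(x)$. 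To pass from compact to closed sets, I would establish exponential tightness: since $M$ is finite in a neighborhood of $0$, pick $\lambda_0>0$ with $\EE[e^{\pm \lambda_0 e_i^\top X_1}]<\infty$ for each coordinate vector $e_i$; then the Chernoff bound applied coordinatewise yields $P(|\bar X_n|\ge R)\le \cost\, e^{-n c R}$ for $R$ large, which is the required exponential tightness.

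For the lower bound, it suffices to show that for every open $G$ and every $x\in G$, $\liminf_n \tfrac1n \log P(\bar X_n\in G) \ge -I(x)$. When $I(x)=\infty$ there is nothing to prove. Otherwise, assume first that $x$ lies in the interior of the effective domain of $\log M$, so the supremum defining $I(x)$ is attained at some $\lambda^\ast$ with $\nabla \log M(\lambda^\ast)=x$. I would introduce the exponentially tilted law $\widetilde P$ with density $\exp(\lambda^{\ast\top}X)/M(\lambda^\ast)$ per sample; under $\widetilde P$ the $X_i$ are i.i.d.\ with mean $x$, so the weak law of large numbers gives $\widetilde P(\bar X_n\in B(x,\varepsilon))\to 1$ for every $\varepsilon>0$. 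Writing
\[
P(\bar X_n \in B(x,\varepsilon)) = M(\lambda^\ast)^n\,\widetilde\EE\!\left[e^{-n\lambda^{\ast\top}\bar X_n}\,\mathbf 1_{\{\bar X_n \in B(x,\varepsilon)\}}\right] \ge e^{-n(\lambda^{\ast\top} x - \log M(\lambda^\ast)) - n\varepsilon|\lambda^\ast|}\,\widetilde P(\bar X_n\in B(x,\varepsilon)),
\]
and taking $\liminf$ then $\varepsilon\to 0$ yields the bound $-I(x)$. For boundary points where no such maximizer exists, I would approximate $x$ by interior points $x_k\to x$ with $I(x_k)\to I(x)$, exploiting convexity and lower semicontinuity of $I$ together with the fact that $0$ is interior to the domain of $\log M$ (so the interior of the domain of $I$ is nonempty and $I$ is continuous relative to its effective domain's interior).

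Finally, for the compact sub-level sets property of $I$: since $I=(\log M)^*$, it is convex and lower semicontinuous; moreover, finiteness of $M$ in a neighborhood of $0$ implies $\log M(\lambda)\le C$ for $|\lambda|\le r$ with some $r>0$, hence $I(x)\ge r|x|-C$, giving super-linear growth and thus compactness of $\{I\le \alpha\}$ for every $\alpha\ge 0$. The main obstacle in the argument is the lower bound at boundary points of the effective domain of $\log M$, where the tilting maximizer fails to exist; the cleanest fix is to carry out the change of measure for interior points and exploit lower semicontinuity and the density of interior points in the effective domain of $I$ (ensured by the interiority of $0$ in the domain of $\log M$) to close the argument.
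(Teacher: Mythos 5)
Your upper bound (Chernoff estimate on half-spaces, covering of compacts, exponential tightness from finiteness of $M$ near $0$) and your argument for compact sub-level sets (the linear bound $I(x)\ge r|x|-C$ plus lower semicontinuity) are the standard ones and are fine; note the paper does not prove this theorem but cites Dembo--Zeitouni and Rassoul-Agha--Sepp\"al\"ainen. The genuine gap is in the lower bound, and your proposed repair does not close it. The tilting step requires a $\lambda^*$ in the \emph{interior} of the effective domain of $\log M$ with $\nabla \log M(\lambda^*)=x$, and under the sole hypothesis that $M$ is finite near $0$ such a $\lambda^*$ can fail to exist on a whole open set of points $x$ with $I(x)<\infty$ that are \emph{interior} to the effective domain of $I$ (so your ``interior versus boundary'' dichotomy misidentifies the obstruction; also, $x$ lies in the space dual to that of $\lambda$, so ``$x$ in the domain of $\log M$'' is not the right condition). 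Concretely, in $d=1$ take the density $c\,e^{-|t|}(1+|t|)^{-3}$: then $\log M$ is finite exactly on $[-1,1]$ and is non-steep, i.e.\ $m_+:=\lim_{\lambda\uparrow 1}(\log M)'(\lambda)<\infty$. For every $x>m_+$ one has $I(x)=x-\log M(1)<\infty$, the supremum is attained only at the boundary point $\lambda^*=1$, and the tilted law at $\lambda^*=1$ has mean $m_+\neq x$, so the law of large numbers under $\widetilde P$ gives $\widetilde P(\bar X_n\in B(x,\varepsilon))\to 0$ and your display yields nothing. Moreover, every point near such an $x$ has the same defect, so approximating by nearby ``interior'' points cannot help; and lower semicontinuity gives $\liminf_k I(x_k)\ge I(x)$, which is the wrong direction — you would need $\limsup_k I(x_k)\le I(x)$ along a sequence of points where tilting does work, and convexity only provides this along segments toward a finite point, whose relevant portion again consists of ``bad'' points in the example above.

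The standard ways to close this gap are genuinely different from your approximation: either (i) prove the lower bound first for the law of $X_1$ conditioned on $\{|X_1|\le\rho\}$ (whose log-MGF is finite everywhere, so that exposed-point/tilting arguments combined with convex duality apply), and then let $\rho\to\infty$ — this is the truncation route used for Cram\'er's theorem in $\R^d$ in Dembo--Zeitouni; or (ii) use the Bahadur--Zabell subadditivity argument (as in the course by Rassoul-Agha and Sepp\"al\"ainen cited in the paper), which yields the lower bound for open convex sets without ever requiring a maximizing $\lambda^*$. Restricting to exposed points without such an additional device is exactly the G\"artner--Ellis situation, where one must \emph{assume} steepness — an assumption not made in the statement you are proving. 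With one of these two devices substituted for your final approximation step, the rest of your proof is correct.
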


In some situations it is useful to replace a sequence of random variables with another one that is ``exponentially close'' to the original one. For our purpose, the following result will suffice and we refer to \cite[Section 4.2.2]{dembo2009large} for a more general presentation.

\begin{proposition}[Exponential equivalence]\label{prop:exp-equivalence}
Let $(X_n)_{n=1}^\infty$ and $(X_n')_{n=1}^\infty$ be two sequences of random variables with values in $\R^d$. If $(X_n)_{n=1}^\infty$ satisfies the LDP with speed $(a_n)_{n=1}^\infty$ and rate function $I$, and it holds
\begin{equation}\label{eq:condition-exp-equiv}
 \lim_{n \to \infty} \frac 1 {a_n} \log P( |X_n - X_n'| > \delta ) = -\infty, \quad \text{for every $\delta>0$,}
\end{equation}
then $(X_n')_{n=1}^\infty$ satisfies the LDP with same speed $(a_n)_{n=1}^\infty$ and same rate function $I$.
\end{proposition}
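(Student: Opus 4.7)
The plan is to transfer both the upper and lower LDP bounds from $(X_n)_{n=1}^\infty$ to $(X_n')_{n=1}^\infty$, exploiting that the event $\{|X_n - X_n'|>\delta\}$ is negligible on the exponential scale $a_n$ by hypothesis \eqref{eq:condition-exp-equiv}. Throughout, I would rely on the elementary identity
\begin{equation*}
\limsup_{n\to\infty}\frac{1}{a_n}\log(p_n+q_n)=\max\left\{\limsup_{n\to\infty}\frac{1}{a_n}\log p_n,\ \limsup_{n\to\infty}\frac{1}{a_n}\log q_n\right\},
\end{equation*}
which converts sums of probabilities into maxima of exponential rates, together with the dual inequality $p_n+q_n\ge \max\{p_n,q_n\}$ for lower bounds.

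For the upper bound, I would fix a closed set $F\subseteq\R^d$ and, for each $\delta>0$, consider its closed $\delta$-neighborhood $F_\delta$. The set inclusion
\begin{equation*}
\{X_n'\in F\}\subseteq \{X_n\in F_\delta\}\cup\{|X_n-X_n'|>\delta\},
\end{equation*}
combined with the LDP upper bound applied to the closed set $F_\delta$ and with assumption \eqref{eq:condition-exp-equiv}, yields
\begin{equation*}
\limsup_{n\to\infty}\frac{1}{a_n}\log P(X_n'\in F)\le -\inf_{x\in F_\delta}I(x).
\end{equation*}
Letting $\delta\downarrow 0$ and using that $I$ has compact sublevel sets and is lower semicontinuous gives $\inf_{F_\delta}I\uparrow \inf_F I$, which closes this half.

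For the lower bound, I would pick an open set $G\subseteq\R^d$, a point $x\in G$, and $\delta>0$ such that the open ball $B(x,2\delta)$ is contained in $G$. Since $|X_n-x|<\delta$ and $|X_n-X_n'|\le\delta$ force $X_n'\in B(x,2\delta)\subseteq G$, one has
\begin{equation*}
\{X_n\in B(x,\delta)\}\subseteq \{X_n'\in G\}\cup\{|X_n-X_n'|>\delta\},
\end{equation*}
and hence $P(X_n'\in G)\ge P(X_n\in B(x,\delta))-P(|X_n-X_n'|>\delta)$. The LDP lower bound for $(X_n)$ shows that the first term is at least of order $e^{-a_n(I(x)+\eta)}$ for any $\eta>0$ and $n$ large enough, whereas \eqref{eq:condition-exp-equiv} makes the second of order $e^{-Ma_n}$ for every $M>0$; taking $M>I(x)+\eta$ lets the first term dominate, giving
\begin{equation*}
\liminf_{n\to\infty}\frac{1}{a_n}\log P(X_n'\in G)\ge -I(x),
\end{equation*}
and taking the supremum over $x\in G$ concludes.

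The main delicate point is the interchange of $\delta\downarrow 0$ with the infimum in the upper bound: without compactness of the sublevel sets of $I$ one could in principle have $\inf_{F_\delta}I<\inf_F I$ uniformly in $\delta$, so the goodness of the rate function is essential precisely here. The rest is a routine bookkeeping of the exponential scales via the logarithmic sum identity stated above.
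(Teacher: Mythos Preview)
Your proof is correct and follows the standard argument for exponential equivalence (essentially that of \cite[Theorem~4.2.13]{dembo2009large}). Note, however, that the paper does not actually prove this proposition: it is stated as a known result with a reference to \cite[Section~4.2.2]{dembo2009large}, and the discussion that follows it in the paper concerns only a sufficient moment condition for verifying hypothesis \eqref{eq:condition-exp-equiv}, not a proof of the proposition itself. So there is nothing to compare against; your write-up simply supplies the omitted textbook proof.
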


Condition \eqref{eq:condition-exp-equiv} can be  checked by suitable exponential moment bounds. Precisely, if
\begin{equation}\label{eq:condition-exp-equiv-moment}
 \lim_{n \to \infty} \frac 1 {a_n} \log \EE\sqa{ \exp\bra{a_n \alpha  |X_n - X_n'| }}  = 0, \quad \text{for every $\alpha>0$,}
\end{equation}
then \eqref{eq:condition-exp-equiv} holds. Indeed, for any $\delta>0$ and $\alpha>0$,  we bound from above by Markov's inequality
\begin{equation*}\begin{split}
\frac{1}{a_n} \log P( |X_n - X_n'| > \delta ) & \le \frac{1}{a_n} \log   \EE\sqa{ \exp\bra{ a_n \alpha \bra{|X_n - X_n'|-  \delta } }}\\
& = \frac{1}{a_n} \log   \EE\sqa{ \exp\bra{ a_n \alpha |X_n - X_n'|}  } -  \alpha \delta.
\end{split}\end{equation*}
Letting first $n \to \infty$ and using \eqref{eq:condition-exp-equiv-moment}, we find
$$ \limsup_{n\to \infty} \frac{1}{a_n} \log P\bra{|X_n - X_n'| > \delta}  \le -\alpha \delta. $$
Letting then $\alpha \to \infty$, we conclude that \eqref{eq:condition-exp-equiv} holds true.

Our approach to prove LDPs for MLPs is based on a result due to Chaganty \cite{chaganty1997large}. Given two sequences of dependent random variables, it provides sufficient conditions which guarantee that the sequence of joint laws satisfies an LDP. To keep the exposition short, we state it in a slightly simplified form.

Let $(X_n)_{n=1}^\infty$ and $(Z_n)_{n=1}^\infty$ be two sequences of random variables with values on the Polish spaces $\mathsf X$ and $\mathsf Z$,  respectively. We say that the sequence of their regular conditional probabilities
$\bra{ P( X_n \in \cdot | Z_n = \cdot)}_{n=1}^\infty$ 
satisfies the LDP \emph{continuity condition}
with speed $(a_n)_{n=1}^\infty$ and function  $J: \mathsf X\times\mathsf Z \to [0,\infty]$, $(x,z) \mapsto J(x|z)$, 
if
\begin{enumerate}
\item $J(\cdot|\cdot)$ is jointly lower semicontinuous and, for every $L< \infty$ and $\mathsf K \subseteq \mathsf Z$ compact, the set
\begin{equation}\label{eq:compact-sublevel} \bigcup_{z \in\mathsf K} \cur{ x \in \mathsf X: J(x|z) \le L}\end{equation}
is compact;
\item for every $z \in \mathsf Z$ and $(z_n)_{n=1}^\infty \subseteq \mathsf Z$ such that $z_n \to z$, the sequence $(X_n|Z_n = z_n)_{n=1}^\infty$ satisfies the LDP with speed $(a_n)_{n=1}^\infty$ and rate function $J(\cdot| z)$, i.e., for every Borel set $B \subseteq \mathsf X$, 
\begin{equation}\label{eq:definition-ldp-conditional}\begin{split}
 - & \inf_{x \in B^\circ} J(x|z) \le \liminf_{n \to \infty} \frac 1 {a_n} \log P(X_n \in B | Z_n = z_n)\\  & \quad \phantom{ffffffffffff}\le \limsup_{n \to \infty} \frac 1 {a_n} \log P(X_n \in B|Z_n=z_n) \le - \inf_{x \in \ov B} J(x|z).
 \end{split}
\end{equation} 
\end{enumerate}

\begin{remark}\label{re2:22012026}
We notice that, if $J$ is jointly lower semciontinuous, then the set \eqref{eq:compact-sublevel} is always closed. Indeed, given a sequence $(x_n)_{n=1}^\infty$ in the set converging to some $x$, then there exists $(z_n)_{n=1}^\infty\subseteq\mathsf K$ such that $J(x_n|z_n) \le L$ for every $n$. Up to taking a subsequence, since $\mathsf K$ is compact, we have $z =\lim_{n}z_n \in \mathsf K$ and therefore by lower semicontinuity of $J$ it holds $J(x|z) \le \liminf_{n} J(x_n|z_n) \le L$. Hence, to establish (1), it is sufficient to argue that $J$ is jointly lower semicontinuous and the set \eqref{eq:compact-sublevel} is pre-compact.
\end{remark}

The following result is a combination of \cite[Theorem 2.3, Lemma 2.6]{chaganty1997large}.

\begin{lemma}[Chaganty's lemma]\label{lem:chaganty}
Let $((X_n, Z_n))_{n=1}^\infty$ be random variables with values in $\mathsf X \times \mathsf Z$, where $\mathsf X$ and $\mathsf Z$ are Polish spaces. If the sequence $(Z_n)_{n=1}^\infty$ satisfies the LDP with speed $(a_n)_{n=1}^\infty$ and rate function $I_{Z}: \mathsf Z \to [0, \infty]$, and the sequence 
$\bra{ P( X_n \in \cdot | Z_n = \cdot)}_{n=1}^\infty$ 
satisfies the LDP continuity condition, with speed $(a_n)_{n=1}^\infty$ and function $J: \mathsf X \times \mathsf Z \to [0,\infty]$, then the sequence $((X_n, Z_n))_{n=1}^\infty$ satisfies the LDP, with speed $(a_n)_{n=1}^\infty$ and rate function
$$ I(x,z) := J(x|z) + I_{Z}(z) , \quad \text{for $(x,z) \in \mathsf X \times \mathsf Z$}.$$
\end{lemma}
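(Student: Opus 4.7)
The proof will follow the structure of the classical Chaganty-style argument for combining a marginal LDP with a conditional one. I would organize it in four steps: verifying that $I$ is a rate function, proving exponential tightness of $((X_n,Z_n))_{n=1}^\infty$, and then establishing the LDP upper bound on closed sets and the lower bound on open sets.

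First, $I = J + I_Z\circ \pi_{\mathsf Z}$ is lower semicontinuous as a sum of lower semicontinuous functions (joint lower semicontinuity of $J$ is built into the LDP continuity condition, and $I_Z$ is lower semicontinuous as a rate function). Compactness of the sublevel set $\cur{I\le L}$ is then inherited from condition (1) of the continuity assumption applied with $\mathsf K = \cur{I_Z\le L}$, intersected with the closedness coming from lower semicontinuity. Exponential tightness of $((X_n,Z_n))$ then follows: $(Z_n)$ is exponentially tight (automatic from its LDP on a Polish space), and condition (1) combined with the uniformization lemma below provides, for each $M$, a compact set in $\mathsf X\times\mathsf Z$ whose complement has probability exponentially negligible at rate $M$.

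The technical heart of the proof is a \emph{uniformization} lemma stating that the sequentially formulated LDP continuity condition implies neighborhood-wise bounds. Precisely, given $z_0\in\mathsf Z$ and a closed set $C\subseteq\mathsf X$, for any $c<\inf_{x\in C}J(x|z_0)$ there exist a neighborhood $V_\eps$ of $z_0$ and $N_\eps$ such that $a_n^{-1}\log P(X_n\in C\mid Z_n=z)\le -c$ for all $z\in V_\eps$ and $n\ge N_\eps$; a symmetric statement for open $U\subseteq\mathsf X$ containing $x_0$ gives a uniform lower bound $a_n^{-1}\log P(X_n\in U\mid Z_n=z)\ge -J(x_0|z_0)-\eps$ on a neighborhood of $z_0$. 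Both are proved by contradiction: a failure would yield a sequence $z_n\to z_0$ along which \eqref{eq:definition-ldp-conditional} is violated.

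Armed with this, the upper bound on a closed set $F$ is obtained by first reducing to a compact $F$ via exponential tightness, and then, at each $(x_0,z_0)\in F$, choosing product neighborhoods $\ov{U_0}\times\ov{V_0}$ on which $J\ge J(x_0|z_0)-\eps$ (joint lower semicontinuity) and $I_Z\ge I_Z(z_0)-\eps$. Integrating the uniform conditional bound against $P_{Z_n}$,
\[
P\bra{(X_n,Z_n)\in\ov{U_0}\times\ov{V_0}} = \int_{\ov{V_0}} P(X_n\in\ov{U_0}\mid Z_n=z)\, dP_{Z_n}(z),
\]
and combining with the $(Z_n)$-upper bound on $\ov{V_0}$, one obtains an exponent of at most $-(I(x_0,z_0)-2\eps)$; covering $F$ by finitely many such products and letting $\eps\to 0$ concludes. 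The lower bound on an open $G$ is symmetric: for $(x_0,z_0)\in G$ with $I(x_0,z_0)<\infty$ and a product neighborhood $U\times V\subseteq G$, one integrates the uniform conditional lower bound against $P_{Z_n}$ restricted to a sub-neighborhood of $z_0$ and applies the $(Z_n)$-lower bound. The main obstacle is precisely the uniformization step, i.e. converting the sequential form of the LDP continuity condition into neighborhood-wise bounds; this is exactly what the joint lower semicontinuity of $J$ and the compactness condition (1) are designed to enable.
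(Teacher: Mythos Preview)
The paper does not supply its own proof of this lemma: it is quoted as a known result, with the one-line attribution ``a combination of \cite[Theorem 2.3, Lemma 2.6]{chaganty1997large}''. So there is nothing in the paper to compare your argument against.

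That said, your sketch is essentially Chaganty's original proof. The uniformization step (converting the sequential form of the LDP continuity condition into neighborhood-wise upper and lower bounds via a contradiction/diagonal argument) is exactly how \cite{chaganty1997large} proceeds, and the subsequent finite-cover argument for the upper bound on compact sets, together with the integration
\[
P\bra{(X_n,Z_n)\in \ov{U_0}\times \ov{V_0}}=\int_{\ov{V_0}} P(X_n\in \ov{U_0}\mid Z_n=z)\,dP_{Z_n}(z),
\]
is the standard route. The only place your outline is a bit thin is the exponential tightness of $((X_n,Z_n))_n$: you need, for each $M$, a compact $\mathsf K_X\subseteq\mathsf X$ such that $\sup_{z\in\mathsf K_Z} a_n^{-1}\log P(X_n\notin\mathsf K_X\mid Z_n=z)\le -M$ eventually, and this is where condition (1) (compactness of $\bigcup_{z\in\mathsf K_Z}\{J(\cdot|z)\le L\}$) is used in an essential way, combined with your uniform upper bound applied to the closed set $C=\mathsf X\setminus \mathsf K_X$. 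This is precisely the content of \cite[Lemma 2.6]{chaganty1997large}; once spelled out, your argument is complete.
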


\begin{example}\label{ex:gaussian-chaganty}
Consider a sequence $((X_n, V_n))_{n}$ where $(V_n)_n \in \R^{k \times k}_{\succeq 0}$ satisfies the LDP with speed $n$ and rate function $I_V$ and 
$$ X_n = \frac{\sqrt{V_n}}{\sqrt{n}} N,$$
where $N$ is a standard Gaussian vector in $\R^{k}$, independent of $(V_n)_n$. Then, by standard Gaussian computations, the conditional law of $X_n$ given $V_n = v_n$ is Gaussian with mean zero and covariance matrix $v_n/n$,
and the sequence
$\bra{ P( X_n \in \cdot | V_n = \cdot)}_{n=1}^\infty$ 
satisfies the LDP continuity condition with speed $n$ and $J(x|v) := \frac 1 2 \nor{x}_v^2$,  defined 
in the extended sense (see Section \ref{sec:gennot}) if $v$ is not invertible.  Indeed,  joint lower semicontinuity of $J$ is discussed in Remark~\ref{rem:zqz-lsc} and compactness of \eqref{eq:compact-sublevel} is elementary, thus establishing condition (1), while condition (2) follows either by a direct application of G\"artner-Ellis theorem 
\cite[Theorem 2.3.6]{dembo2009large} or, using only the results introduced in this section, by first applying Cram\'er's theorem for a fixed $v$ (and representing the Gaussian $\sqrt{n} N$ as a sum of i.i.d.\ standard Gaussian variables) and then arguing exponential equivalence of a sequence $X_n = \sqrt{v_n/n}N$ with $X'_n:= \sqrt{v/n}N$, whenever $v_n \to v$. 

 It follows from Chaganty's lemma that the sequence $((X_n, V_n))_{n=1}^{\infty}$ satisfies the LDP with speed $n$ and rate function
$$ I(x,v) = \frac 1 2 \nor{x}_v^2 + I_V(v).$$
By the contraction principle we then have that the sequence of marginals $(X_n)_{n=1}^{\infty}$ satisfies the LDP with the same speed $n$ and rate function
$$ I_X(x) := \inf_{v \in \R^{k \times k}_{\succeq 0}} \Big\{\frac 1 2\nor{x}_v^2 + I_V(v)\Big\} .$$
\end{example}

Next, we recall that large deviations analogue of tightness for probability measures is exponential tightness. A sequence of random variables $(X_n)_{n=1}^\infty$ taking values in a Polish space $\mathsf X$ is said to be exponentially tight with speed $(a_n)_{n=1}^\infty \subseteq (0, \infty)$ if for every $M>0$ there exists a compact set $\mathsf K = \mathsf{K}(M) \subseteq \mathsf X$ such that
\begin{equation}\label{eq:exp-tight-def}
 \limsup_{n \to \infty} \frac {1}{a_n} \log P( X_n \notin \mathsf K) \le -M.
\end{equation}
Such a condition follows easily if one provides a function $\psi: \mathsf X \to [0, \infty]$ with compact sub-level sets such that
$$ \limsup_{n \to \infty} \frac 1 {a_n} \log \EE\sqa{ \psi(X_n)^{a_n}  } < \infty.$$
Indeed, by Markov's inequality, for every $b>0$,
$$ P( \psi(X_n) > b ) = P(\psi(X_n)^{a_n} > b^{a_n}) \le  \frac{ \EE\sqa{ \psi(X_n)^{a_n}}}{ b^{a_n}},  $$
hence, setting $\mathsf K_b := \cur{x \in \mathsf X\, : \psi(x) \le b}$, we have
$$\limsup_{n \to \infty} \frac {1}{a_n} \log P( X_n \notin \mathsf{K}_b) \le \limsup_{n \to \infty} \frac 1 {a_n} \log \EE\sqa{ \psi(X_n)^{a_n}  } - b $$
and therefore, given $M>0$, one can find  $b$ sufficiently large so that \eqref{eq:exp-tight-def} holds with $\mathsf K = \mathsf{K}_b$.

In the case of processes with continuous trajectories, to establish exponential tightness one can rely on 
a variant of a result due
to Schied \cite{schied1994criteria, , schied1997moderate, djehiche1998large}. 
We provide a detailed statement here, together with a proof in Appendix~\ref{app:schied}.

\begin{theorem}[Exponential tightness criterion]
\label{thm:schied}
Let $Z_n=(Z_n(x))_{x\in [0,1]^d}$, $n\geq 1$, be a sequence of stochastic processes taking values in $\R^k$, with continuous sample paths. 

The sequence $(Z_n)_{n =1}^\infty$ is exponentially tight in $C([0,1]^d,\mathbb R^k)$,  endowed with the topology of uniform convergence, with speed $n$, if there exist $\bar{n} \in \mathbb{N}$,  $\alpha>0$, $\cost < \infty$ such that, for all $x,y \in [0,1]^d$ with $x \neq y$,
\begin{equation}\label{eq:schied-increments}
\sup_{n\ge \bar{n}} \frac {1}{n} \log \EE\sqa{ \bra{1+ \frac{|Z_n(x)- Z_n(y)| }{|x-y|^{\alpha} }}^{n}} \le \cost 
\end{equation}
and 
\begin{equation}\label{eq:schied-zero} \sup_{n \ge \bar{n}} \frac {1}{n} \log \EE\sqa{ \bra{1+ \abs{Z_n(0)} }^{n} }\le \cost. \end{equation}
\end{theorem}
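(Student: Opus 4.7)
The strategy is to produce a sequence of compact exhaustions of $C([0,1]^d, \R^k)$ by Hölder-type balls and to bound the probability of escape from them. Fix an auxiliary exponent $\beta \in (0, \alpha)$. By the Arzelà--Ascoli theorem, for any $b_1, b_2>0$, the set
$$\mathsf K_{b_1,b_2} := \cur{f \in C([0,1]^d, \R^k) \, : \, |f(0)| \le b_1, \ \sup_{x \ne y} \frac{|f(x)-f(y)|}{|x-y|^\beta} \le b_2 }$$
is pre-compact in the uniform topology and closed by lower semicontinuity of the two defining functionals, hence compact. Thus, via a union bound, it suffices to show that
$$ \lim_{b \to \infty} \limsup_{n \to \infty} \frac{1}{n} \log \bra{ P(|Z_n(0)| > b) + P\bra{ \sup_{x \neq y} \tfrac{|Z_n(x)-Z_n(y)|}{|x-y|^\beta} > b} } = -\infty.$$

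The first tail follows at once from Markov's inequality applied with the $n$-th power and from assumption \eqref{eq:schied-zero}: $P(|Z_n(0)| > b) \le (1+b)^{-n} \EE[(1+|Z_n(0)|)^n] \le e^{\cost n} (1+b)^{-n}$, whence the desired decay is obtained by letting $n \to \infty$ and then $b \to \infty$. The second tail is handled via a Kolmogorov-type dyadic chaining. Introducing the dyadic grids $D_j := (2^{-j}\Z)^d \cap [0,1]^d$ and the maximum neighbor increment
$$M_j := \max \cur{ |Z_n(x) - Z_n(y)| \, : \, x,y \in D_j, \ \max_{i=1,\ldots,d} |x_i-y_i| = 2^{-j}},$$
a standard combinatorial telescoping, together with the continuity of $Z_n$ and the density of $\cup_j D_j$ in $[0,1]^d$, yields the pointwise deterministic bound
$$ \sup_{x \ne y} \frac{|Z_n(x)-Z_n(y)|}{|x-y|^\beta} \le \cost(d) \sum_{j \ge 1} 2^{j\beta} M_j.$$

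It remains to estimate the $n$-th moment of the right-hand side. At level $j$ there are at most $\cost(d) 2^{jd}$ neighbor pairs, and for each such pair the assumption \eqref{eq:schied-increments} gives $\EE[|Z_n(x)-Z_n(y)|^n] \le \cost \, 2^{-jn\alpha} e^{\cost n}$. A union bound then yields $\EE[M_j^n] \le \cost(d) 2^{jd} 2^{-jn\alpha} e^{\cost n}$, and Minkowski's inequality applied termwise to the series produces
$$ \EE\sqa{ \bra{\sum_{j \ge 1} 2^{j\beta} M_j}^n}^{1/n} \le e^{\cost} \sum_{j \ge 1} \cost(d)^{1/n} \, 2^{jd/n} \, 2^{-j(\alpha - \beta)},$$
which stays uniformly bounded in $n$ as soon as $d/n < \alpha - \beta$. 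A final application of Markov's inequality with the $n$-th power gives the required exponential decay of the Hölder-norm tail, completing the proof. The delicate point I expect to be the main obstacle is the balance between the combinatorial factor $2^{jd}$ arising from the union bound over neighbor pairs and the increment gain $2^{-jn\alpha}$ provided by the hypothesis at each dyadic level: it is precisely the freedom to choose $\beta$ strictly less than $\alpha$, together with the strong exponential form of \eqref{eq:schied-increments}, that makes the Minkowski sum convergent for all $n$ large enough.
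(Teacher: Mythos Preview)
Your proposal is correct and follows essentially the same approach as the paper's proof: both rely on a Kolmogorov-type dyadic chaining to control the $L^n$-norm of the H\"older seminorm via Minkowski's inequality (the paper packages this as a separate quantitative Kolmogorov continuity theorem, while you inline it), followed by Markov's inequality and Arzel\`a--Ascoli. The only organizational difference is that the paper combines the point value and the H\"older seminorm into a single function $\psi$ with compact sublevel sets and verifies $\sup_n \frac{1}{n}\log\EE[\psi(Z_n)^n]<\infty$, whereas you split into two tails via a union bound; this is cosmetic.
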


\begin{remark}\label{rem:squared-schied}
By the elementary inequality $1+a \le \sqrt{2  (1+a^2) }$ and Cauchy-Schwarz inequality, one can replace in the hypothesis both $|Z_n(x)- Z_n(y)| /|x-y|^{\alpha}$ in  \eqref{eq:schied-increments} and $\abs{Z_n(0)}$ in \eqref{eq:schied-zero} with their squared values, without affecting the thesis.
\end{remark}

We end this section with a result that allows to represent the rate function associated to a sequence of processes $(Z_n)_{n=1}^\infty$ taking values in $C(\mathsf K, \R^k)$, where $\mathsf K\subset\mathsf X$ is a compact set of a Polish space $\mathsf X$, in terms of the rate functions of its finite-dimensional marginals. The result is a straightforward consequence of the classical Dawson-Gärtner theorem for projective limits and the exponential tightness, see \cite[Theorem 4.6.1 and Theorem 4.2.4]{dembo2009large}. Hereafter, for a finite set $\mathsf F$ we denote by $|\mathsf F |$ its cardinality.

\begin{proposition}[LDP for processes]\label{prop:dawson}
Let $\mathsf X$ be a Polish space, $\mathsf K\subset\mathsf X$ a compact set, $Z_n=(Z_n(x))_{x\in K}$, $n\geq 1$, a sequence of stochastic processes taking values in $\R^k$, and let $(a_n)_{n=1}^\infty$ be a sequence of positive real numbers with $a_n \uparrow \infty$. Assume that for every finite set $\mathsf F \subseteq \mathsf K$, the sequence $(Z_n(x))_{x \in \mathsf F}$, $n\geq 1$, satisfies the LDP with speed $(a_n)_{n=1}^\infty$ and rate function $I_{\mathsf F}: \R^{k \times |\mathsf F|} \to [0, \infty]$ and that the sequence $(Z_n)_{n=1}^\infty$ is exponentially tight in $C(\mathsf K; \R^k)$ with speed $(a_n)_{n=1}^\infty$. Then, the sequence $(Z_n)_{n=1}^\infty$ satisfies the LDP in $C(\mathsf K; \R^k)$ with speed $(a_n)_{n=1}^\infty$ and rate function
$$ I(z) := \sup_{\mathsf F \subseteq \mathsf K,\,\, \text{finite} } I_{\mathsf F}( (z(x))_{x \in \mathsf F} ), \quad \text{for $z \in C(\mathsf K, \R^k)$.}$$
\end{proposition}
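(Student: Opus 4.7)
The plan is to combine the Dawson--G\"artner theorem on projective limits of LDPs with the inverse contraction principle, which upgrades an LDP from a coarser to a finer topology under exponential tightness. First, I view each $Z_n$ as a random element of the product space $\mathbb R^{k\mathsf K}$, equipped with the topology of pointwise convergence. This space is the projective limit of the finite-dimensional projection spaces $\mathbb R^{k|\mathsf F|}$, indexed by the directed family of finite subsets $\mathsf F \subseteq \mathsf K$, with associated projections $\pi_{\mathsf F}: \mathbb R^{k\mathsf K}\to \mathbb R^{k|\mathsf F|}$. By assumption, $\pi_{\mathsf F}(Z_n) = (Z_n(x))_{x\in\mathsf F}$ satisfies the LDP with speed $(a_n)_{n=1}^\infty$ and rate function $I_{\mathsf F}$ for every such finite $\mathsf F$. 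The Dawson--G\"artner theorem \cite[Theorem 4.6.1]{dembo2009large} then gives the LDP for $(Z_n)_{n=1}^\infty$ in the projective limit $\mathbb R^{k\mathsf K}$, with speed $(a_n)_{n=1}^\infty$ and rate function
$$\widetilde I(z) := \sup_{\mathsf F \subseteq \mathsf K,\, \text{finite}} I_{\mathsf F}\bigl(\pi_{\mathsf F}(z)\bigr), \quad z \in \mathbb R^{k\mathsf K}.$$

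Second, I transfer this LDP to the uniform topology on $C(\mathsf K;\mathbb R^k)$. Since each $Z_n$ has continuous sample paths, it takes values in $C(\mathsf K;\mathbb R^k)$, and the inclusion $C(\mathsf K;\mathbb R^k)\hookrightarrow\mathbb R^{k\mathsf K}$ is continuous when the former carries the uniform topology and the latter the product topology. The inverse contraction principle \cite[Theorem 4.2.4]{dembo2009large} states that if a sequence satisfies an LDP in a coarser Hausdorff topology and is exponentially tight in a finer one, then the LDP holds also in the finer topology, with rate function obtained by restriction. Applying this with the assumed exponential tightness of $(Z_n)_{n=1}^\infty$ in $C(\mathsf K;\mathbb R^k)$ with uniform topology yields the LDP in $C(\mathsf K;\mathbb R^k)$ with the restricted rate function $I(z) = \widetilde I(z)$ for $z\in C(\mathsf K;\mathbb R^k)$, which matches the expression in the statement. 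Compactness of the sublevel sets of $I$ in the uniform topology is automatic from exponential tightness together with the lower semicontinuity of $I$ (which is inherited as a supremum of lower semicontinuous compositions $I_{\mathsf F}\circ \pi_{\mathsf F}$).

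The argument is essentially a mechanical assembly of two standard large-deviations tools, so no substantial obstacle arises. The only point one must recognize is that Dawson--G\"artner remains applicable even when $\mathsf K$ is uncountable, because the directed index set is the collection of finite subsets of $\mathsf K$ rather than $\mathsf K$ itself, making the projective-limit construction well-defined without any countability or separability assumption on the index set. All the non-trivial work — the finite-dimensional LDPs and the exponential tightness — is precisely what has been provided, respectively, in Section~\ref{sec:ldp-finite} and Section~\ref{sec:exp-tight}.
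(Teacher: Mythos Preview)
Your proposal is correct and follows exactly the approach indicated in the paper, which states that the result ``is a straightforward consequence of the classical Dawson--G\"artner theorem for projective limits and the exponential tightness, see \cite[Theorem 4.6.1 and Theorem 4.2.4]{dembo2009large}.'' Your write-up simply unpacks this sentence, citing the same two theorems in the same roles.
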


\section{Exponential tightness}\label{sec:exp-tight}

In this section, we establish exponential tightness for MLPs with Gaussian weights and Lipschitz activation functions.

Before we state and prove the main result of this section, we provide a bound on moments of $\chi^2$-distributed random variables.

\begin{lemma}\label{lem:gamma}
Let $(K_n)_{n=1}^\infty$ be a sequence of random variables with $K_n \stackrel{law}{=} \chi^2_n$ for every $n \ge 1$. Then
$$ \sup_{n \ge 1} \frac 1 n \log \EE\sqa{ \bra{1 + \frac{K_n}{n} }^{\gamma n} } < \infty, \quad \text{ for every $\gamma>0$.} $$
\end{lemma}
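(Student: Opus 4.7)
The plan is to reduce the bound to an explicit moment computation for the $\chi^2_n$ distribution. The elementary inequality $1+x \le 2\max\{1,x\}$ valid for $x \ge 0$ allows me to write
\begin{equation*}
\bra{1+\frac{K_n}{n}}^{\gamma n} \le 2^{\gamma n}\bra{1+\bra{\frac{K_n}{n}}^{\gamma n}},
\end{equation*}
so the problem reduces to a uniform bound on $\frac{1}{n}\log \EE\sqa{(K_n/n)^{\gamma n}}$; the contribution of the $2^{\gamma n}$ prefactor will only amount to an additive $\gamma \log 2$ after dividing by $n$.

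I would next invoke the explicit moment formula $\EE[K_n^s] = 2^s \Gamma(n/2 + s)/\Gamma(n/2)$, valid for $s > -n/2$, applied at $s = \gamma n$, to obtain
\begin{equation*}
\EE\sqa{\bra{\frac{K_n}{n}}^{\gamma n}} = \bra{\frac{2}{n}}^{\gamma n}\frac{\Gamma(n(\tfrac 1 2 + \gamma))}{\Gamma(n/2)}.
\end{equation*}
Stirling's expansion $\log\Gamma(N) = N\log N - N + O(\log N)$ applied to both gamma factors yields, after a short calculation, an expansion of the form $\gamma n \log n + n\, C(\gamma) + O(\log n)$ for the logarithm of the ratio of gammas, with $C(\gamma) = (\tfrac 1 2+\gamma)\log(\tfrac 1 2+\gamma) - \gamma + \tfrac 1 2 \log 2$ depending only on $\gamma$.

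The one delicate point is the cancellation of the $\gamma n \log n$ term against the factor $n^{-\gamma n}$ coming from the $(2/n)^{\gamma n}$ prefactor: this is precisely what makes $\frac{1}{n}\log \EE[(K_n/n)^{\gamma n}]$ converge to a finite limit (namely $\gamma\log 2 + C(\gamma)$) rather than diverging logarithmically in $n$. Since each individual expectation $\EE[(1+K_n/n)^{\gamma n}]$ is finite, the convergence of the normalized logarithm, together with the harmless $\gamma\log 2$ from the $2^{\gamma n}$ prefactor, yields the claimed supremum bound. I do not expect any genuine obstacle beyond this Stirling bookkeeping; if desired, one could even carry out the argument using only one-sided inequalities (such as $\Gamma(N)\le N^N e^{-N}\cdot\sqrt{2\pi/N}\cdot e^{1/(12N)}$) to avoid dealing with the $O(\log N)$ error term altogether.
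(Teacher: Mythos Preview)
Your proposal is correct and follows essentially the same route as the paper: both reduce to bounding $\EE[(K_n/n)^{\gamma n}]$ via the explicit $\chi^2$ moment formula $(2/n)^{\gamma n}\Gamma(n(\gamma+1/2))/\Gamma(n/2)$ and then apply Stirling to see that the $n^{\gamma n}$ factors cancel. The only cosmetic difference is the reduction step---the paper splits on $\{K_n/n\le 1\}$ and uses the $L^n$-triangle inequality, whereas your pointwise bound $(1+x)^{\gamma n}\le 2^{\gamma n}(1+x^{\gamma n})$ is a slightly cleaner way to reach the same moment computation.
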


\begin{proof}
We split the expectation over the event $A = \cur{ K_n/ n\le 1}$ and its complement, so that by triangle inequality for the $L^n$-norm, letting $1_A$ denote the indicator function of a set $A$, we have
\begin{eqnarray*} 
 \EE\sqa{ \bra{1 + \frac{K_n}{n} }^{\gamma n}}^{1/n} & = &\nor{\bra{1 + \frac{K_n}{n } }^{\gamma} }_{L^n} \\
& \le& \nor{1_A \bra{1 + \frac{K_n}{n} }^\gamma }_{L^n} + \nor{1_{A^c} \bra{1 + \frac{K_n}{n} }^\gamma }_{L^n} \\
& \le& 2^\gamma\bra{1+ \nor{1_{A^c} \bra{ \frac{K_n}{n} }^\gamma }_{L^n}}.
\end{eqnarray*}
Next, letting ${\rm Gamma}(\alpha,\theta)$ denote the gamma distribution with parameters $\alpha,\theta>0$, we use that $\chi^2_n = {\rm Gamma}(n/2,1/2)$, so that $K_n/n$ has law ${\rm Gamma}(n/2,n/2)$ and therefore, recalling the formula for its moments, we have
$$  \EE\sqa{ 1_{A^c} \bra{ \frac{K_n}{n} }^{n\gamma} } \le \EE\sqa{ \bra{ \frac{K_n}{n} }^{n\gamma} } = \bra{\frac{2}{n}}^{\gamma n} \frac{\Gamma(n(\gamma+1/2))}{\Gamma(n/2)}, $$
where $\Gamma(\cdot)$ denotes the Euler gamma function.
By Stirling's approximation for the gamma function we have
$$ \Gamma(x) = \bra{\frac{x}{e}} ^x  \sqrt{\frac{2\pi}{x}} \bra{1 + O\bra{\frac 1 x }}, \quad \text{as $x \to \infty$.}$$
Therefore, for some positive constant $\cost = \cost(\gamma)<\infty$,
\begin{eqnarray*}  \bra{\frac{2}{n}}^{\gamma n} \frac{\Gamma(n(\gamma+1/2))}{\Gamma(n/2)} & \le& \cost \bra{\frac{2}{n}}^{\gamma n} \frac{ (n(\gamma+1/2)/e)^{n(\gamma+1/2)} }{ (n/(2e))^{n/2}  } \\
& \le& \cost^n  \frac{n^{(\gamma+1/2)n} }{n^{\gamma n} n^{n/2} } \le \cost^n,\quad\text{for all $n$ large enough,}
\end{eqnarray*}
which yields the claim.
\end{proof}

Next, assuming that the activation functions are Lipschitz continuous, we establish exponential tightness for the sequence of network outputs. 

\begin{proposition}
Fix $L \ge 1$, input $d_{\din}$ and output  $d_{\out}\ge 1$ dimensions, Lipschitz activation functions $(\sigma^{(\ell)})_{\ell=0, 1,\ldots, L}$, and consider a sequence of MLPs outputs $
f^{(L)}_{(n)}: \R^{d_{\din}} \to \R^{d_{\out}}$ with Gaussian weights as in \eqref{eq:lecun} and hidden layers widths $n_\ell = n_\ell(n)$ such that
$$   \liminf_{n \to \infty} \frac{ n_{\ell}}{n} >0 \quad \text{for every $\ell=1, \ldots, L-1$.} $$
Then, for every compact $\cX \subseteq \R^{d_{\din}}$, the sequence of real-valued continuous processes $\big(\frac{1}{\sqrt{n}} f^{(L)}_{(n)}\big)_{n=1}^{\infty}$, defined on $\cX$, is exponentially tight on $C(\cX; \R^{d_{\out} })$ (equipped with the uniform topology) with speed $n$.
\end{proposition}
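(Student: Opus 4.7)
The plan is to verify the hypotheses of Schied's criterion, Theorem~\ref{thm:schied}, in the squared form from Remark~\ref{rem:squared-schied}. Since $\cX$ is compact, we may enclose it in a cube: up to an affine change of coordinates $\cX\subseteq[0,1]^{d_{\din}}$, and since the neural network output is defined on all of $\R^{d_{\din}}$, it suffices to establish exponential tightness on $C([0,1]^{d_{\din}},\R^{d_{\out}})$ by continuity of the restriction map. Writing $Z_n := n^{-1/2}f^{(L)}_{(n)}$, the two estimates to be proved are, uniformly in $x\neq y\in [0,1]^{d_{\din}}$,
$$\sup_n \frac{1}{n}\log\EE\!\left[\left(1 + \frac{|Z_n(x)-Z_n(y)|^2}{|x-y|^2}\right)^n\right]\le C\quad\text{and}\quad\sup_n\frac{1}{n}\log\EE\!\left[\left(1+|Z_n(0)|^2\right)^n\right]\le C.$$

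For the increment bound, the key input is the conditional Gaussian representation, Remark~\ref{rem:conditional-gaussian}: conditionally on layer $\ell-1$,
$$|f^{(\ell)}(x)-f^{(\ell)}(y)|^2 \stackrel{\mathrm{law}}{=} \frac{K_{n_\ell}}{n_{\ell-1}}|\sigma(f^{(\ell-1)}(x))-\sigma(f^{(\ell-1)}(y))|^2\le\Lip(\sigma)^2\,\frac{K_{n_\ell}}{n_{\ell-1}}|f^{(\ell-1)}(x)-f^{(\ell-1)}(y)|^2,$$
with $K_{n_\ell}\sim\chi^2_{n_\ell}$ independent of the preceding layers. Iterating and telescoping produces the stochastic dominance
$$\frac{|Z_n(x)-Z_n(y)|^2}{|x-y|^2}\le\frac{C}{n}\prod_{\ell=1}^L\tilde R_\ell,\qquad \tilde R_\ell:=K_{n_\ell}/n_\ell,$$
with the $\tilde R_\ell$ mutually independent (they arise from distinct matrices $W^{(\ell)}$). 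Using the elementary bound $1+\prod_i a_i\le\prod_i(1+a_i)$ for $a_i\ge 0$, and crucially keeping the $1/n$ factor attached to the output-layer term $\tilde R_L=K_{d_{\out}}/d_{\out}$ of \emph{fixed} dimension, one gets after taking $n$-th power and expectation
$$\EE\!\left[\left(1+\frac{|Z_n(x)-Z_n(y)|^2}{|x-y|^2}\right)^n\right]\le\EE\!\left[\left(1+\frac{CK_{d_{\out}}}{nd_{\out}}\right)^n\right]\prod_{\ell=1}^{L-1}\EE\!\left[\left(1+\frac{K_{n_\ell}}{n_\ell}\right)^n\right].$$
Each of the middle factors has bounded $n^{-1}\log$: when $n_\ell/n$ stays in a bounded range, this follows from Lemma~\ref{lem:gamma} applied with $\gamma=n/n_\ell$; when $n_\ell/n\to\infty$, one may instead use $(1+x)^n\le e^{nx}$ combined with the Gaussian moment generating function $\EE[\exp(tK_{n_\ell})]=(1-2t)^{-n_\ell/2}$. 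The output-layer factor is handled by $(1+cK/n)^n\le e^{cK}$ plus finiteness of $\EE[e^{cK_{d_{\out}}}]$ for $c$ small, supplemented in the complementary regime by the rougher split $(1+y)^n\le 2^n\max(1,y)^n$ and the moment bound $\EE[K_{d_{\out}}^n]\lesssim(2n)^n$.

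The base-point bound on $\EE[(1+|Z_n(0)|^2)^n]$ is obtained by the same mechanism. Using the sublinear growth $|\sigma(v)|^2\le 2\|\sigma\|_{\Lip}^2(n_{\ell-1}+|v|^2)$ and the conditional Gaussian representation gives
$$\frac{|f^{(\ell)}(0)|^2}{n_\ell}\le 2\|\sigma\|_{\Lip}^2\,\tilde R_\ell\left(1+\frac{|f^{(\ell-1)}(0)|^2}{n_{\ell-1}}\right),$$
and a telescoping use of $1+a(1+b)\le(1+a)(1+b)$, together with absorbing the factor $d_{\out}/n$ into the output-layer term, produces
$$(1+|Z_n(0)|^2)^n\le(1+CK_{d_{\out}}/n)^n\prod_{\ell=1}^{L-1}(1+C\tilde R_\ell)^n,$$
which factorizes in expectation exactly as the increment bound. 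The main technical obstacle is the asymmetry between intermediate wide layers (where Lemma~\ref{lem:gamma} applies) and the output layer of fixed dimension $d_{\out}$: the crucial point is to keep the vanishing $1/n$ glued to the output $\chi^2_{d_{\out}}$ factor, as this is what makes every contribution bounded at the $n^{-1}\log$ scale.
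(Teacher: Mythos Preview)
Your proof is correct and uses the same core ingredients as the paper (conditional Gaussianity, the Lipschitz bound, $\chi^2$ moment estimates, and the inequality $1+ab\le(1+a)(1+b)$), but the organization differs. The paper proceeds by induction on $\ell$, proving uniform bounds on the layer-wise quantities $\frac{1}{n_\ell}\sum_i|f_i^{(\ell)}(x)-f_i^{(\ell)}(x')|^2/|x-x'|^2$, and handles the fixed-width output layer by an \emph{artificial extension trick}: it enlarges $d_{\out}$ to $n_L(n):=n$ so that the last layer is also wide and a single inductive step suffices, then recovers the original output by projecting onto the first $d_{\out}$ coordinates. You instead unroll the recursion completely into a product of independent $\chi^2$ factors and isolate the output layer by keeping the $1/n$ glued to $K_{d_{\out}}$. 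Your route is more explicit and makes the multiplicative structure transparent; the paper's extension trick is cleaner in that it removes all special-casing of the output layer and the accompanying ad hoc estimate on $\EE[(1+cK_{d_{\out}}/n)^n]$.

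Two small tightenings are worth noting. First, the mutual independence of the $\tilde R_\ell$ is correct but not merely because ``they arise from distinct matrices $W^{(\ell)}$'': each $K_{n_\ell}$ also depends on the unit direction determined by layer $\ell-1$. The rigorous argument is that, conditionally on $(W^{(1)},\ldots,W^{(\ell-1)})$, rotational invariance of $W^{(\ell)}$ forces $K_{n_\ell}\sim\chi^2_{n_\ell}$ regardless of that direction, hence $K_{n_\ell}\perp\sigma(W^{(1)},\ldots,W^{(\ell-1)})$, which yields mutual independence. Second, Lemma~\ref{lem:gamma} is stated for fixed $\gamma$, so invoking it ``with $\gamma=n/n_\ell$'' needs either the remark that its constant is locally bounded in $\gamma$, or the paper's H\"older step with exponent $n_\ell/(n\beta)\ge1$, which handles all $n_\ell\ge\beta n$ uniformly and dispenses with your case-split between bounded and unbounded $n_\ell/n$.
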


\begin{proof}
To simplify the notation, we assume that $\sigma^{(\ell)} = \sigma$ for each $\ell$. Moreover, since up to translations and dilations one has $\cX \subseteq [0,1]^{d_{\din}}$, we directly prove the result in the case $\cX = [0,1]^{d_{\din} }$. 

The assumption entails that, for some $\beta>0$ and some $\bar{n} \ge 1$ it holds $n_\ell(n)\ge n \beta$ for every $n \ge \bar{n}$. To avoid treating the output layer (whose width is fixed) as a separate case, we artificially extend the output dimension of the network so that $f^{(L)}_{(n)}$ takes values in $\R^{n}$, i.e., we let $n_L(n) := n$ and without loss of generality, we also assume that $\bar{n} \ge d_{\out}$, so that the original network output, taking values in $\R^{d_{\out}}$,  can be obtained by considering the first $d_{\out}$ coordinates of the extended one. Let us notice that such extension is always possible up to enlarging the probability space where $f^{(L)}_{(n)}$ is defined to accomodate further i.i.d.\ weights for the output layer.

With this convention, we start proving, by induction over $\ell=0, 1, \ldots, L$ that, for every $\gamma>0$, there exists $\cost = \cost(\beta, \bar{n}, \gamma, {\bf \sigma}, \cX) < \infty$ such that
\begin{equation}\label{eq:induction-tightness-increments} \sup_{n \ge \bar{n} } \frac 1 n \log \EE\sqa{ \bra{1 +   \frac 1 {n_\ell} \sum_{i=1}^{n_\ell}  \frac{ \abs{f^{(\ell)}_{i}(x) - f^{(\ell)}_{i}(x') }^2 } {|x-x'|^{2} } }^{\gamma n } } \le \cost \quad \text{for every $x, x' \in \cX$}.\end{equation}
Here and below, to keep notation simple, we write $f^{(\ell)}_{i}$ instead of $f^{(\ell)}_{(n), i}$. In the case $\ell=0$ the claim is trivial because $f^{(0)}(x)=x$. Assuming that the statement holds for $\ell-1$, we prove it for $\ell$. By Remark~\ref{rem:conditional-gaussian} we can write, conditionally upon  $( f^{(\ell-1)}(x), f^{(\ell-1)}(x'))$, for every $i=1,\ldots, n_{\ell}$,
$$
f^{(\ell)}_i(x) - f^{(\ell)}_i(x')  = \sum_{j=1}^{n_{\ell-1}} W^{(\ell)}_{i,j} \bra{ \sigma(f^{(\ell-1)}_j(x)) -  \sigma(f^{(\ell-1)}_j(x')) }  
 \stackrel{law}{=} \sqrt{v^{(\ell)}_n}  N_i, 
$$
where we set
$$ v^{(\ell)}_n :=   \frac {1} {n_{\ell-1}} \sum_{j=1}^{n_{\ell-1}} \abs{ \sigma(f^{(\ell-1)}_j(x)) -  \sigma(f^{(\ell-1)}_j(x')) }^2 $$
and $(N_i)_{i=1}^{n_\ell}$ are i.i.d.\ standard (real) Gaussian random variables, independent of $f^{(\ell-1)}(x)$ and $f^{(\ell-1)}(x')$.  Therefore, still  conditionally upon  $( f^{(\ell-1)}(x), f^{(\ell-1)}(x'))$, we have
$$ \frac 1 {n_\ell} \sum_{i=1}^{n_\ell}  \frac{ \abs{f^{(\ell)}_i(x) - f^{(\ell)}_i(x') }^2 } {|x-x'|^{2} } \stackrel{law}{=} \frac{v^{(\ell)}_n}{|x-x'|^ 2} \cdot \frac{K_{n_{\ell}}}{n_\ell}, $$
where $K_{n_\ell}$ is $\chi^2_{n_\ell}$-distributed. Using the elementary inequality $1+ab \le (1+a)(1+b)$, for $a, b \ge 0$, we find therefore  that
$$ \EE\sqa{ \bra{1+ \frac 1 {n_\ell} \sum_{i=1}^{n_\ell}  \frac{ \abs{f^{(\ell)}_i(x) - f^{(\ell)}_i(x') }^2 } {|x-x'|^{2} }}^{\gamma n}  } \le \EE\sqa{ \bra{ 1+\frac{v^{(\ell)}_n}{|x-x'|^ 2}}^{\gamma n} \bra{1+\frac{K_{n_{\ell}}}{n_\ell }}^{\gamma n } }.$$
By independence
we have
$$ \EE\sqa{ \bra{ 1+\frac{v^{(\ell)}_n}{|x-x'|^ 2}}^{\gamma n} \bra{1+\frac{K_{n_{\ell}}}{n_\ell}}^{\gamma n } } = \EE\sqa{ \bra{ 1+\frac{v^{(\ell)}_n}{|x-x'|^ 2}}^{\gamma n}} \EE\sqa {\bra{1+\frac{K_{n_{\ell}}}{n_\ell }}^{\gamma n } }.$$
The second term is readily estimated from above first by H\"older's inequality with exponent $n_{\ell}/(n\beta)\ge 1$,
$$ \EE\sqa {\bra{1+\frac{K_{n_{\ell}}}{n_\ell }}^{\gamma n } } \le \EE\sqa {\bra{1+\frac{K_{n_{\ell}}}{n_\ell }}^{\gamma n_\ell/\beta } }^{n\beta/n_{\ell} } $$
and then using Lemma~\ref{lem:gamma}, which, for some constant $\cost = \cost(\gamma, \beta) < \infty$, yields
\begin{equation*}
\frac {1} {n} \log \EE\sqa {\bra{1+\frac{K_{n_{\ell}}}{n_\ell }}^{\gamma n_\ell/\beta } }^{n\beta/n_\ell}   = \frac{\beta}{n_\ell}  \log \EE\sqa {\bra{1+ \frac{K_{n_{\ell}}}{n_{\ell} }}^{\gamma n_\ell/\beta } } \le \cost.
\end{equation*}

Hence, to conclude with \eqref{eq:induction-tightness-increments} it is sufficient to bound from above the term
$$ \frac 1 n \log \EE\sqa{ \bra{ 1+\frac{v^{(\ell)}_n}{|x-x'|^ 2}}^{\gamma n} }.$$
Since the activation function is Lipschitz continuous, we find
\begin{equation*}\begin{split} v^{(\ell)}_n & =  \frac {1} {n_{\ell-1}} \sum_{j=1}^{n_{\ell-1}} \abs{ \sigma(f^{(\ell-1)}_j(x)) -  \sigma(f^{(\ell-1)}_j(x')) }^2 \\ & \le \Lip\bra{ \sigma}^2\frac{1 }{n_{\ell-1}}\sum_{j=1}^{n_{\ell-1}} \abs{ f^{(\ell-1)}_j(x) -  f^{(\ell-1)}_j(x') }^2
\end{split}\end{equation*}
and therefore, again using $1+ab\le (1+a)(1+b)$,
$$ 1+\frac{v^{(\ell)}_n}{|x-x'|^ 2} \le \bra{1+  \Lip\bra{ \sigma}^2 }\bra{1+\frac 1 {n_{\ell-1}} \sum_{i=1}^{n_{\ell-1}} \frac{ \abs{f_i^{(\ell-1)}(x) -f_i^{(\ell-1)}(x') }^2 } {|x-x'|^ 2} }.$$
We conclude that
\begin{equation*}\begin{split}  \frac 1 n \log  \EE\sqa{ \bra{ 1+\frac{v^{(\ell)}_n}{|x-x'|^ 2}}^{\gamma n}} & \le  \gamma \log\bra{1+\beta \Lip\bra{ \sigma}^2} \\
& + \frac 1 n  \log  \EE\sqa{ \bra{ 1+ \frac 1 {n_{\ell-1}} \sum_{i=1}^{n_{\ell-1}} \frac{ \abs{f_i^{(\ell-1)}(x) -f_i^{(\ell-1)}(x') }^2 } {|x-x'|^ 2}}^{\gamma  n}}
\end{split}\end{equation*}
and therefore the validity of \eqref{eq:induction-tightness-increments} follows, taking into account the inductive assumption.

Arguing similarly -- actually using only the fact that activation functions have linear growth -- we can prove that for any given $x_0 \in \cX$ there exists $\cost = \cost(\beta, \bar{n}, \gamma, \sigma, x_0)$ such that
\begin{equation}\label{eq:induction-tightness-point} \sup_{n \ge \bar n } \frac 1 n \log \EE\sqa{ \bra{1 +   \frac 1 {n_\ell} \sum_{i=1}^{n_\ell}  \abs{f^{(\ell)}_i(x_0)}^2 } ^{\gamma n }}\leq\cost\end{equation}
for $\ell=0,1, \ldots, L$.

We are now in a position to apply Theorem~\ref{thm:schied} to $Z_n := \frac{1}{\sqrt{n}} f^{(L)}$  on the state space $\R^{d_{\out}}$, also taking into account Remark~\ref{rem:squared-schied}. Indeed, the assumptions are satisfied: condition \eqref{eq:schied-increments}
is implied by \eqref{eq:induction-tightness-increments} in the case $\ell=L$, recalling that we set $n_L(n)= n$ and $\bar n \ge d_{\out}$, so that
\begin{equation*}\begin{split} &  \sup_{n \ge \bar{n} } \frac 1 n \log \EE\sqa{ \bra{1 +   \sum_{i=1}^{d_{\out } }  \frac{ \abs{ \frac 1 {\sqrt{n} } f^{(L)}_{i}(x) - \frac{1}{\sqrt{n}} f^{(L)}_{i}(x') }^2 } {|x-x'|^{2} } }^{\gamma n } } \le\phantom{gggggggggggg}\\
&  \phantom{gggggllllllggggggg}\sup_{n \ge \bar{n} } \frac 1 n \log \EE\sqa{ \bra{1 +   \frac 1 {n} \sum_{i=1}^{n }  \frac{ \abs{f^{(L)}_{i}(x) - f^{(L)}_{i}(x') }^2 } {|x-x'|^{2} } }^{\gamma n } } \le \cost;
\end{split}\end{equation*}
condition \eqref{eq:schied-zero}
is obtained similarly from \eqref{eq:induction-tightness-point} in the case $\ell=L$.
\end{proof}

\section{Large deviations}\label{sec:ldp-finite}

Given a Lipschitz continuous (activation) function $\sigma: \R \to \R$ and a positive semidefinite matrix $q \in \R^{k \times k}_{\succeq 0}$, we define the conditional $\log$-MGF, for $\lambda \in  \R^{k \times k}_{\sym}$,
$$\Lambda^{\sigma}(\lambda|q) = \log \EE\sqa{ \exp\bra{ \tr\bra{\lambda \sigma(\sqrt{q} N)^{\otimes 2 } }}} \in (-\infty, \infty],$$
where $N$ denotes a standard Gaussian vector with values in $\R^k$, and we refer to Section~\ref{sec:notation} for  the general notation. Taking the (partial) Legendre-Fenchel transform with respect to $\lambda$ of $\Lambda^{\sigma}$, we define the function $$J^{\sigma}(v | q) := \sup_{\lambda } \cur{ \tr\bra{\lambda v} - \Lambda^{\sigma}(\lambda|q)} \in [0, \infty],\quad v\in\R^{k \times k}_{\succeq 0}. $$
Note that $J^\sigma(\cdot|q)$ has a unique zero.

The main technical result of this section is the following lemma.

\begin{lemma}\label{lem:cramer-best}
If $\sigma: \R \to \R$ is Lipschitz continuous, then:
\begin{enumerate}
\item $J^\sigma(\cdot|\cdot)$ is jointly lower semicontinuous and, for every $L<\infty$ and $\mathsf K \subseteq \R^{k \times k}_{\succeq 0}$ compact, the set
\begin{equation*}
\bigcup_{q \in \mathsf K} \cur{v\in\R^{k \times k}_{\succeq 0}: J^\sigma(v|q) \le L}\end{equation*}
is compact.
\item Let $(N_i)_{i=1}^\infty$ be i.i.d.\ standard Gaussian random variables with values in $\R^k$. Then, for any sequence $(q_n)_n \subseteq \R^{k \times k}_{\succeq 0}$  converging to $q \in \R^{k \times k}_{\succeq 0}$, the process $(V_n)_{n=1}^{\infty}$, with
$$V_n := \frac 1 n \sum_{i=1}^n \sigma( \sqrt{q_n} N_i)^{\otimes 2},$$
satisfies the large deviation principle in $\R^{k \times k}$ with speed $n$ and rate function $J^{\sigma}(\cdot | q)$. 
\end{enumerate}
\end{lemma}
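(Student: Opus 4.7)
For part (1), the approach is to exploit the duality structure of $J^\sigma(\cdot|q)$ as the Legendre--Fenchel conjugate of $\Lambda^\sigma(\cdot|q)$, combined with Theorem~\ref{thm:wijsman}. The first step would be to show that $q_n \to q$ implies epi-convergence of $\Lambda^\sigma(\cdot|q_n)$ to $\Lambda^\sigma(\cdot|q)$. Since these are proper, convex, and lower semicontinuous functions on $\R^{k\times k}_{\sym}$, by a standard convex analysis result (e.g.\ \cite[Theorem 7.17]{rockafellar1998variational}) it suffices to verify pointwise convergence on the interior of the effective domain of $\Lambda^\sigma(\cdot|q)$. The liminf bound is a direct application of Fatou's lemma, using that H\"older continuity of the matrix square root, $\nor{\sqrt{q_n} - \sqrt{q}}_{\op} \le \sqrt{\nor{q_n - q}_{\op}}$, yields $\sqrt{q_n}N \to \sqrt{q}N$ almost surely. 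For the upper bound, writing $\sqrt{q_n}N = \sqrt{q}N + R_n$ with $|R_n| \le \nor{\sqrt{q_n}-\sqrt{q}}_{\op}|N|$, Lipschitzianity of $\sigma$ yields
$$\exp\bra{\tr\bra{\lambda\sigma(\sqrt{q_n}N)^{\otimes 2}}} \le \exp\bra{\tr\bra{\lambda\sigma(\sqrt{q}N)^{\otimes 2}}} \exp\bra{C_n|N|(1+|N|)},$$
with $C_n = C\nor{\lambda}_{\op}\sqrt{\nor{q_n-q}_{\op}} \to 0$. An application of H\"older's inequality with conjugate exponents $p>1$ and $p'$, where $p$ is close to $1$ so that $p\lambda$ remains in the interior of the domain of $\Lambda^\sigma(\cdot|q)$, gives $\limsup_n \Lambda^\sigma(\lambda|q_n) \le \Lambda^\sigma(p\lambda|q)/p$; letting $p\to 1^+$ and using continuity of convex functions on the interior of their domain yields the desired upper bound. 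Then, Theorem~\ref{thm:wijsman} transfers the epi-convergence to $J^\sigma(\cdot|q_n)\to J^\sigma(\cdot|q)$, and the epi-liminf condition \eqref{eq:liminf} produces precisely the joint lower semicontinuity. For the compactness statement, I would test $\lambda = tI$ for small $t>0$ in the defining sup, yielding $t\tr(v) \le L + \Lambda^\sigma(tI|q)$; the last term is uniformly bounded on compact $\mathsf K$ for small $t$ by elementary Gaussian moment estimates, so $\tr(v)$ is uniformly bounded, and since $v\succeq 0$ this confines $v$ to a bounded set; combined with the joint lower semicontinuity and Remark~\ref{re2:22012026}, compactness follows.

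For part (2), the plan is to reduce to Cram\'er's theorem via exponential equivalence. Consider the auxiliary i.i.d.\ sum $V_n' := \frac{1}{n}\sum_{i=1}^n \sigma(\sqrt{q}N_i)^{\otimes 2}$ with the \emph{fixed} limit $q$. Since $\sigma$ has at most linear growth, $\abs{\sigma(\sqrt{q}N_i)^{\otimes 2}}$ is bounded by $C(1+|N_i|^2)$, and Gaussian moments ensure that its MGF is finite in a neighborhood of $0$; Cram\'er's theorem then provides the LDP for $V_n'$ with rate $J^\sigma(\cdot|q)$. For the exponential equivalence, combining the identity $|u^{\otimes 2}-v^{\otimes 2}|\le |u-v|(|u|+|v|)$, Lipschitzianity of $\sigma$, and H\"older continuity of the square root yields
$$|V_n - V_n'| \le \frac{C\sqrt{\nor{q_n-q}_{\op}}}{n}\sum_{i=1}^n|N_i|(1+|N_i|),$$
so by independence,
$$\frac{1}{n}\log \EE\sqa{\exp\bra{n\alpha|V_n-V_n'|}} \le \log \EE\sqa{\exp\bra{\alpha C\sqrt{\nor{q_n-q}_{\op}}|N|(1+|N|)}},$$
which tends to $0$ as $n\to\infty$ since the exponent vanishes and Gaussian moments provide integrability. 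This verifies \eqref{eq:condition-exp-equiv-moment}, and Proposition~\ref{prop:exp-equivalence} concludes the proof with the same rate $J^\sigma(\cdot|q)$.

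The main obstacle is the upper-bound half of the pointwise convergence of $\Lambda^\sigma(\lambda|\cdot)$, since the effective domain may depend on $q$ and one must verify that the H\"older trick with $p>1$ close to $1$ keeps $p\lambda$ inside the domain at the limit $q$; this is fortunately handled by the openness of the interior. A further subtlety is whether the sup in $J^\sigma(v|q)$ can be restricted to the interior of $\mathrm{dom}\,\Lambda^\sigma(\cdot|q)$, which is needed to apply the pointwise convergence of $\Lambda^\sigma$ only on the interior; this is resolved by the standard fact that a proper convex lower semicontinuous function is continuous on the interior of its domain, so the sup over the interior equals the sup over the whole space.
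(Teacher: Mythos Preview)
Your proposal is correct and follows essentially the same route as the paper: both establish joint lower semicontinuity of $J^\sigma$ by proving epi-convergence of $\Lambda^\sigma(\cdot|q_n)$ to $\Lambda^\sigma(\cdot|q)$ (Fatou for the liminf, a H\"older splitting controlling $\sigma(\sqrt{q_n}N)^{\otimes 2}-\sigma(\sqrt{q}N)^{\otimes 2}$ for the limsup) and then invoking Theorem~\ref{thm:wijsman}, test $\lambda=\eps_0 I$ for the compactness of sublevel sets, and handle part~(2) via Cram\'er's theorem for $V_n'$ plus the same exponential-equivalence estimate. The only cosmetic difference is that the paper verifies the epi-limsup condition~\eqref{eq:limsup} directly by exhibiting the recovery sequence $\lambda_n=\lambda/p_n$, whereas you take the limits in the opposite order (first $n\to\infty$ with $p$ fixed, then $p\downarrow 1$) to get pointwise convergence on $\operatorname{int}\operatorname{dom}\Lambda^\sigma(\cdot|q)$ and appeal to \cite[Theorem~7.17]{rockafellar1998variational}.
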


\begin{proof}

\newcounter{step}
\setcounter{step}{0}
We split the proof into multiple steps.

\textbf{Step \thestep: basic inequalities.} \stepcounter{step}
 Using that $\sigma$ is Lipschitz continuous,  for every $\eta \in \R^k$, $q, q' \in \R^{k \times k}_{\succeq 0}$, it holds
$$\abs{ \sigma( \sqrt{q} \eta ) }   \le \nor{\sigma}_{\Lip} \bra{1 + \abs{ \sqrt{q} \eta} } \le \nor{\sigma}_{\Lip} \bra{1+ \nor{\sqrt{ q }}_{\op}\abs{\eta} }
 \le  \nor{\sigma}_{\Lip} \bra{1+ \sqrt{\nor{ q  }_{\op} }} \bra{1+\abs{\eta}}$$
and
\begin{eqnarray*}\abs{  \sigma( \sqrt{q} \eta)  - \sigma(\sqrt{q'} \eta) } & \le& \nor{\sigma}_{\Lip} \abs{\sqrt{q} \eta - \sqrt{q'} \eta }  \le \nor{\sigma}_{\Lip} \nor{\sqrt{q} - \sqrt{q'} }_{\op}  \abs{\eta} \\
& \le & \nor{\sigma}_{\Lip} \sqrt{ \nor{{q} - {q'} }_{\op} } \abs{\eta}. \end{eqnarray*}
Therefore, 
$$\nor{  \sigma( \sqrt{q} \eta )^{\otimes 2 } } = \abs{ \sigma( \sqrt{q} \eta ) }^2 \le  \nor{\sigma}_{\Lip}^2 \bra{1+ \sqrt{\nor{ q  }_{\op} }}^2  \bra{1+\abs{\eta}}^2,$$
and
\begin{equation}\label{eq:continuity-sigma-eta}\begin{split}
 & \nor{  \sigma( \sqrt{q} \eta )^{\otimes 2 } - \sigma(\sqrt{q'} \eta)^{\otimes 2}}    \\
& \le \nor{  \sigma( \sqrt{q} \eta )^{\otimes 2}  - \sigma(\sqrt{q'} \eta)\otimes \sigma( \sqrt{q} \eta ) } + \nor{  \sigma( \sqrt{q'} \eta )^{\otimes 2}  - \sigma(\sqrt{q'} \eta)\otimes \sigma( \sqrt{q} \eta ) } \\
& = \bra{ \abs{  \sigma( \sqrt{q} \eta )} +\abs{  \sigma( \sqrt{q'} \eta )} }  \abs{  \sigma( \sqrt{q} \eta )-\sigma(\sqrt{q'} \eta)}\\
& \le  \nor{\sigma}_{\Lip}^2 \bra{2+ \sqrt{\nor{ q  }_{\op} } + \sqrt{\nor{ q'  }_{\op} }} \sqrt{ \nor{ q' -q}_{\op} } \bra{1+\abs{\eta}} \abs{\eta}.
\end{split}\end{equation}

\textbf{Step \thestep: lower semicontinuity of $J^\sigma$.} \stepcounter{step}  Choosing $\eta = N$ a standard Gaussian random variable in $\R^k$, we find that 
\begin{eqnarray*}\Lambda^{\sigma}(\lambda|q) & = &\log\EE\sqa{ \exp\bra{ \tr( \lambda \sigma(\sqrt{q}N)^{\otimes 2 } } } \\
& \le& \log \EE\sqa{ \exp \bra{ \nor{\lambda} \nor{ \sigma(\sqrt{q} N )^{\otimes 2} } } } \\
& \le &\log \EE\sqa{\exp \bra{ \nor{\lambda} \nor{\sigma}_{\Lip}^2 \bra{1+ \sqrt{\nor{ q  }_{\op} }}^2 \bra{1+\abs{N}}^2 } } \le c_0
\end{eqnarray*}
for some $c_0 < \infty$, for every $(\lambda, q) \in \R^{k \times k}_{\sym} \times \R^{k \times k}_{\succeq 0}$ in a suitable neighbourhood of $(0,0)$, e.g., provided that
$$ \nor{\lambda} \nor{\sigma}_{\Lip}^2 \bra{1+ \sqrt{\nor{ q  }_{\op} }}^2 \le 1/4. $$
This proves that for any given $q \in \R^{k \times k}_{\succeq 0}$, the function $\Lambda^{\sigma}(\cdot|q)$ is proper. Since lower semicontinuity and convexity are standard properties of $\log$-MGFs, we deduce that also its Legendre-Fenchel transform $J^{\sigma}(\cdot|q)$ is proper, lower semicontinuous and convex. Establishing joint lower semicontinuity of $J^\sigma$ requires some extra care: given any sequence  $((v_n, q_n))_{n=1}^{\infty} \subseteq \R^{k\times k}_{\succeq 0} \times \R^{k\times k}_{\succeq 0}$ converging to $(v,q)$, we claim that the sequence of proper functions $(\Lambda^{\sigma}(\cdot| q_n) )_{n=1}^{\infty}$ epi-converges to $\Lambda^{\sigma}(\cdot| q)$, so that by Theorem \ref{thm:wijsman} the Legendre-Fenchel transform $(J^{\sigma}(\cdot|q_n))_{n=1}^{\infty}$ epi-converges to $J^{\sigma}(\cdot|q)$, and in particular, by  \eqref{eq:liminf},
$$ J^{\sigma}(v|q) \le \liminf_{n \to \infty} J^{\sigma}(v_n|q_n),$$
establishing  lower semicontinuity. To prove the epi-convergence claim, consider any sequence $(\lambda_n)_{n=1}^{\infty} \subseteq \R^{k \times k}_{\sym}$ converging to $\lambda$. By continuity of the matrix square root and of $\sigma$, we have that 
$$ \exp\bra{ \tr( \lambda_n \sigma(\sqrt{q_n} N)^{\otimes 2} )} \to \exp\bra{ \tr( \lambda \sigma(\sqrt{q} N)^{\otimes 2}  )} \quad \text{a.s.\ as $n \to \infty$.}$$
Fatou's lemma entails
$$ \EE\sqa{  \exp\bra{ \tr( \lambda \sigma(\sqrt{q} N)^{\otimes 2}  )} } \le \liminf_{n \to \infty} \EE\sqa{  \exp\bra{ \tr( \lambda_n \sigma(\sqrt{q_n} N)^{\otimes 2}  )} },$$
that, up to taking $\log$ on both sides, establishes \eqref{eq:liminf} for $f_n = \Lambda^{\sigma}(\cdot | q_n)$. To prove \eqref{eq:limsup}, we collect first a general inequality: for any $\lambda \in \R^{k\times k}_{\sym}$, any $q, q' \in \R^{k \times k}_{\succeq 0}$ and $p\in (1, \infty)$,
\begin{equation} \small\label{eq:limsup-key-bound}\begin{split} & \EE\sqa{ \exp\bra{ \frac 1 p \tr\bra{ \lambda \sigma(\sqrt{q'} N)^{\otimes 2}  } } } \\
& = \EE\sqa{ \exp\bra{ \frac 1 p \tr\bra{  \lambda \sigma(\sqrt{q} N)^{\otimes 2}  } + { \frac 1 p \tr\bra{  \lambda \sqa{ \sigma(\sqrt{q'} N)^{\otimes 2}  -  \sigma(\sqrt{q} N)^{\otimes 2} } } } } }  \\
& \le  \EE\sqa{  \exp\bra{ \tr\bra{  \lambda \sigma(\sqrt{q} N)^{\otimes 2}  }}}^{\frac 1p} 
\EE\sqa{ \exp\bra{  (p-1)^{-1} \tr\bra{ \lambda \sqa{  \sigma(\sqrt{q'} N)^{\otimes 2} -  \sigma(\sqrt{q} N)^{\otimes 2}  }  }  } }^{1-
\frac 1 p},
\end{split}\end{equation}
by H\"older's inequality. Using \eqref{eq:continuity-sigma-eta}, we bound from above 
\begin{equation*}\begin{split}
& \EE\sqa{ \exp\bra{  (p-1)^{-1} \tr\bra{ \lambda \sqa{  \sigma(\sqrt{q'} N)^{\otimes 2} -  \sigma(\sqrt{q} N)^{\otimes 2}  }  }  } } \\
& \quad \le  \EE\sqa{ \exp\bra{ (p-1)^{-1} \nor{\lambda} \nor{  \sigma(\sqrt{q'} N)^{\otimes 2} -  \sigma(\sqrt{q} N)^{\otimes 2}  } }} \\
& \quad \le \EE\sqa{ \exp\bra{ (p-1)^{-1} \nor{\lambda} \nor{\sigma}_{\Lip}^2 \bra{2+ \sqrt{\nor{ q  }_{\op} } + \sqrt{\nor{ q'  }_{\op} }}  \sqrt{ \nor{ q' -q}_{\op} } \bra{1+\abs{N}} \abs{N} } }.
\end{split}
\end{equation*}
We now specialize to the case $q':= q_n$ with $q_n \to q$ as $n \to \infty$, so that $\sup_{n} \nor{ q_n}_{\op} < \infty$ and $\nor{q_n -q}_{\op} \to 0$. We can find therefore exponents $(p_n)_{n=1}^{\infty}\subset (1, \infty)$ with  $p_n \downarrow 1$ and such that
\begin{equation}\label{eq:condition-p_n}  (p_n-1)^{-1} \nor{\lambda} \nor{\sigma}_{\Lip}^2 \bra{2+ \sqrt{\nor{ q_n  }_{\op} } + \sqrt{\nor{ q  }_{\op} }}  \sqrt{ \nor{ q_n -q}_{\op} } \to 0. \end{equation}
Therefore
$$  \EE\sqa{ \exp\bra{  \frac{\nor{\lambda} \nor{\sigma}_{\Lip}^2}{(p_n-1)} \bra{2+ \sqrt{\nor{ q  }_{\op} } + \sqrt{\nor{ q_n  }_{\op} }}  \sqrt{ \nor{ q_n -q}_{\op} } \bra{1+\abs{N}} \abs{N} } } \to 1,$$
by dominated convergence, because $\exp\bra{\frac 1 4 \bra{1+\abs{N}}\abs{N} }$ is integrable and domination holds if  $n$ is so large that the left hand side in \eqref{eq:condition-p_n} is smaller than $1/4$. Taking the $\log$ in \eqref{eq:limsup-key-bound}, with $q':= q_n$ and $p:=p_n$, we eventually find 
$$ \limsup_{n \to \infty} \Lambda^{\sigma}\bra{  \lambda/{p_n} | q_n} \le   \Lambda^{\sigma}\bra{  \lambda| q},$$
that is \eqref{eq:limsup} with the sequence $\lambda_n := \lambda/p_n$. The proof of the lower semi-continuity of $J^{\sigma}$ is completed.

\textbf{Step \thestep: compactness of the union of sub-level sets of $J^\sigma$.} \stepcounter{step} 
Given any compact set $\mathsf K \subseteq \R^{k\times k}_{\succeq 0}$, by choosing $\lambda = \eps_0 \Id$ with $\eps_0>0$ such that
$$ \eps_0 \nor{\sigma}_{\Lip}^2 \bra{1+ \sqrt{\sup_{q \in \mathsf K} \nor{ q  }_{\op} }}^2 \le 1/4$$
we deduce that, for every $v \in \R^{k \times k}_{\succeq 0}$ and $q \in \mathsf K$,
$$ J^{\sigma}(v|q) \ge \eps_0 \tr( v ) - c_0, $$
and therefore, for every $L< \infty$,
$$\bigcup_{q \in \mathsf K} \cur{ v \in \R^{k\times k}_{\succeq 0}\, :\,  J^\sigma(v|q) \le L} \subseteq \cur{ v\in \R^{k\times k}_{\succeq 0}\, : \, \eps_0 \tr( v ) \le  c_0 + L }.$$
So the union of sub-level sets is a compact set since it is closed (see Remark \ref{re2:22012026}) and contained in a compact. 

\textbf{Step \thestep: proof of the LDP. 
} \stepcounter{step} First, we notice that if $q_n =q$ identically, the thesis is a direct application of Cramér's theorem in $\R^{k(k-1)/2}$, since the variables 
$$V_n' := \frac 1 n \sum_{i=1}^n \sigma(\sqrt{q} N_i)^{\otimes 2}$$
are the empirical mean of an i.i.d.\ family with log-MGF $\Lambda^{\sigma}(\cdot|q)$, that is finite in a neighborhood of the origin.  Then, we argue that $(V_n)_{n=1}^{\infty}$ and $(V'_n)_{n=1}^{\infty}$ are exponentially equivalent with speed $a_n =n$. Precisely, we establish \eqref{eq:condition-exp-equiv-moment}, that in this setting reads
$$ \lim_{n\to \infty} \frac 1 n \log \EE\sqa{ \exp\bra{ n \alpha \nor{V_n - \bar{V}_n}} } = 0, \quad \text{for every $\alpha>0$.}$$
Using \eqref{eq:continuity-sigma-eta} we have
\begin{equation*}\begin{split}  \nor{ V_n - V'_n }  & \le \frac 1 n \sum_{i=1}^n \nor{ \sigma(\sqrt{q_n} N_i)^{\otimes 2} - \sigma(\sqrt{q} N_i)^{\otimes 2} }\\
& \le  \nor{\sigma}_{\Lip}^2 \bra{ 2+ \sqrt{\nor{q}_{\op}}+ \sqrt{ \nor{q_n}_{\op} } } \sqrt{ \nor{ q_n - q }_{\op} } \cdot \frac 1 n \sum_{i=1}^n \abs{N_i}(1+\abs{N_i}).
\end{split}\end{equation*}
Since the variables $(N_i)_{i=1}^n$ are independent, for every $\alpha>0$,  we have
\begin{equation*}\begin{split}  & \frac 1 n \log \EE\sqa{ \exp\bra{ n \alpha \nor{V_n - V'_n}} } \\
& \quad \le \log\EE\sqa{\exp\bra{  \alpha \nor{\sigma}_{\Lip}^2 \sqrt{ \nor{ q_n - q }_{\op} }\bra{ 2+ \sqrt{\nor{q}_{\op}}+ \sqrt{ \nor{q_n}_{\op} } }  \abs{N_1}(1+\abs{N_1}) }}.
\end{split}
\end{equation*}
As $n \to \infty$, we have that $\nor{q_n - q}_{\op} \to 0$ and therefore, if $n$ is sufficiently large, the right hand side is finite and, by dominated convergence, infinitesimal.
\end{proof}

When $\sigma = \sigma^{(\ell)}$ is the activation function at layer $\ell$, we simply write 
$$ \Lambda^{(\ell)}(\lambda|q) := \Lambda^{\sigma^{(\ell)}}(\lambda | q), \quad \text{and} \quad  J^{(\ell)}(v|q) := J^{\sigma^{(\ell)}}(v |q).$$

With this notation, we establish the following LDP for finitely many inputs. We emphasize that, although its proof has the same structure of \cite[Theorem 1]{macci2024large}, it exploits tools that on one hand allow to extend the result to the general class of Lipschitz activation functions (recall that \cite[Theorem 1]{macci2024large} is limited to bounded activations) and on the other hand  pave the way to the functional generalization. 

\begin{proposition}[LDP for kernels and output] \label{prop:ldp-kernel}
Fix a depth $L \ge 1$, input $d_{\din}$ and output  $d_{\out}\ge 1$ dimensions, Lipschitz activation functions $(\sigma^{(\ell)})_{\ell=1,\ldots, L}$, and consider a sequence of MLPs outputs $f^{(L)}_{(n)}: \R^{d_{\din}} \to \R^{d_{\out}}$ with hidden layers widths $n_\ell = n_\ell(n)$ such that 
$$\lim_{n \to \infty} \frac{n_\ell}{n} := \beta^{(\ell)} \in (0, \infty] \quad \text{for every $\ell = 1, \ldots, L-1$.}$$
Let $\cX := \cur{x_i}_{i=1}^k  \subseteq \R^{d_{\din}}$ be fixed input points and consider the random (post-activation) kernels $v^{(\ell)}(\cX)_{(n)}$ defined as in \eqref{eq:v-def}. Then, 
\begin{enumerate}
\item [(i)] for every $\ell=0, 1, \ldots, L-1$, the LDP holds for $(v^{(\ell)}_{(n)}(\cX))_{n=1}^\infty$ with speed $n$ and rate function recursively defined as $I^{(0)}(v) := \infty 1_{\cur{v\neq v^{(0)}(\cX) }}$ and
$$ I^{(\ell)}(v) := \inf_{q \in \R^{k \times k}_{\succeq 0}}\Big\{ \beta^{(\ell)} J^{(\ell)}(v | q) + I^{(\ell-1)}(q)\Big\}, \quad \text{for $\ell = 1, \ldots, L-1$};$$
\item [(ii)] the LDP holds for $\bra{ \frac 1 {\sqrt{n}} f^{(L)}_{(n)}(\cX)}_{n=1}^\infty$ with speed $n$ and rate function
\begin{equation}\label{eq:rate-out-finite} I^{(L)}_{\cX, \out}( (z_i)_{i=1}^k ) := \inf_{q \in \R^{k \times k}_{\succeq 0}} \Big\{\sum_{i=1}^k \frac 1 2  \nor{z_i}_q^2  + I^{(L-1)}(q)\Big\}.\end{equation}
\end{enumerate}
\end{proposition}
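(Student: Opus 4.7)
The plan is to prove part (i) by induction on $\ell = 0, 1, \ldots, L-1$, combining at each step the conditional Gaussian structure of MLPs (Remark~\ref{rem:conditional-gaussian}) with Chaganty's lemma (Lemma~\ref{lem:chaganty}), the conditional LDP provided by Lemma~\ref{lem:cramer-best}, and the contraction principle. Part (ii) then follows by one more application of Chaganty's lemma, using the exact Gaussian LDP continuity condition of Example~\ref{ex:gaussian-chaganty}.

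\textbf{Part (i).} The base case $\ell = 0$ is trivial, since $v^{(0)}(\cX)$ is deterministic and the LDP with rate function $\infty\cdot 1_{\{v\neq v^{(0)}(\cX)\}}$ follows from Remark~\ref{re:22012026}. For the inductive step $\ell-1 \to \ell$: by Remark~\ref{rem:conditional-gaussian}, conditionally on $v^{(\ell-1)}_{(n)}(\cX) = q$,
\[
v^{(\ell)}_{(n)}(\cX) \stackrel{law}{=} \frac{1}{n_\ell}\sum_{i=1}^{n_\ell}\sigma^{(\ell)}\bigl(\sqrt{q}\, N_i\bigr)^{\otimes 2}.
\]
Lemma~\ref{lem:cramer-best}(2) then yields, along any sequence $q_n \to q$, a conditional LDP with speed $n_\ell$ and rate function $J^{(\ell)}(\cdot|q)$. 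Rescaling from speed $n_\ell$ to speed $n$ via Remark~\ref{re:22012026} (with $\beta = \beta^{(\ell)}$, noting that $J^{(\ell)}(\cdot|q)$ has a unique zero, which handles the case $\beta^{(\ell)} = \infty$) gives the conditional LDP with speed $n$ and rate function $\beta^{(\ell)} J^{(\ell)}(\cdot|q)$. The joint lower semicontinuity of $\beta^{(\ell)} J^{(\ell)}(v|q)$ and the sub-level compactness required by the Chaganty continuity condition both follow directly from Lemma~\ref{lem:cramer-best}(1). With the inductive LDP for $v^{(\ell-1)}_{(n)}(\cX)$ available, Chaganty's lemma produces the joint LDP for $(v^{(\ell)}_{(n)}(\cX), v^{(\ell-1)}_{(n)}(\cX))$ with rate $\beta^{(\ell)} J^{(\ell)}(v|q) + I^{(\ell-1)}(q)$, and the contraction principle, applied to the projection on the first coordinate, yields the recursive formula for $I^{(\ell)}$ claimed in (i).

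\textbf{Part (ii).} Using Remark~\ref{rem:conditional-gaussian} once more, conditionally on $v^{(L-1)}_{(n)}(\cX) = q$, the $d_{\out}$ rows of $\frac{1}{\sqrt n} f^{(L)}_{(n)}(\cX)$ are independent centered Gaussian vectors in $\R^k$ with covariance $q/n$. Example~\ref{ex:gaussian-chaganty} supplies the LDP continuity condition for each row with speed $n$ and conditional rate function $\frac{1}{2}\|\cdot\|_q^2$ (with joint lower semicontinuity granted by Remark~\ref{rem:zqz-lsc}); independence of the rows gives the joint conditional LDP with rate equal to the sum $\sum_i \frac{1}{2}\|z_i\|_q^2$. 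A final application of Chaganty's lemma against the LDP for $v^{(L-1)}_{(n)}(\cX)$ from (i), followed by contracting out the latent $q$, yields the rate function \eqref{eq:rate-out-finite}.

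\textbf{Main obstacle.} The subtle point throughout is not the algebraic bookkeeping of the inductive formula but verifying the Chaganty LDP continuity condition \emph{uniformly} along sequences $q_n \to q$. This is exactly where the full strength of Lemma~\ref{lem:cramer-best} is used: the epi-convergence argument in its proof delivers the joint lower semicontinuity of $J^{(\ell)}$ (which, unlike in \cite{vogel2025large}, cannot be obtained by a direct continuity argument when $\sigma^{(\ell)}$ is only Lipschitz and $q$ may be degenerate), while the sub-level compactness gives the uniform control needed in Chaganty's continuity condition; the rescaling to speed $n$ for $\beta^{(\ell)}=\infty$ relies crucially on the uniqueness of the zero of $J^{(\ell)}(\cdot|q)$.
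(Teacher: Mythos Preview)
Your proposal is correct and follows essentially the same route as the paper: induction on $\ell$ via the conditional Gaussian representation of Remark~\ref{rem:conditional-gaussian}, Lemma~\ref{lem:cramer-best} together with the speed rescaling of Remark~\ref{re:22012026} to verify the Chaganty LDP continuity condition, then Lemma~\ref{lem:chaganty} and the contraction principle; part~(ii) is handled via Example~\ref{ex:gaussian-chaganty} exactly as you describe. Your write-up is in fact slightly more explicit than the paper's (e.g., spelling out the row-independence argument for the $d_{\out}$ output components and the role of the unique zero of $J^{(\ell)}(\cdot|q)$ when $\beta^{(\ell)}=\infty$), but there is no methodological difference.
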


Let us notice that according to \eqref{eq:v-def}, the kernel $v^{(0)}(\cX)$ is deterministic and does not depend on $n$, hence $I^{(0)}$ is well-defined.

\begin{proof}
To prove $(i)$, i.e.,  the LDP for kernels, we argue by induction over $\ell$, the case $\ell=0$ being trivial, because the kernel is deterministic. Assuming that the thesis holds for $\ell-1$, to prove it for the case $\ell$ we start from the observation in Remark \ref{rem:conditional-gaussian}, so that conditionally upon $f^{(\ell-1)}_{(n)}(\cX)$, we have
$$ v^{(\ell)}_{(n)}(\cX) \stackrel{law}{=} \frac 1 {n_\ell} \sum_{i=1}^{n_\ell} \sigma\bra{ \sqrt{ v^{(\ell-1)}_{(n)}(\cX) } N_i }^{\otimes 2 }.$$
By inductive assumption, we have that $v^{(\ell-1)}_{(n)}(\cX)$ satisfies the LDP with speed $n$ and rate function
$I^{(\ell-1)}(v).$
Moreover, by Lemma~\ref{lem:cramer-best} and Remark \ref{re:22012026}, the LDP continuity condition holds for the sequence $(P(v^{(\ell)}_{(n)}(\cX)=\cdot|v^{(\ell-1)}_{(n)}(\cX)=\cdot))_{n=1}^{\infty}$, with speed $n$ and function $\beta^{(\ell)} J^{(\ell)}$. Therefore, by Chaganty's lemma we deduce that the LDP holds for the sequence $((v^{(\ell)}_{(n)}(\cX),v^{(\ell-1)}_{(n)}(\cX)))_n$ with speed $n$ and rate function
\[
(v,q)\mapsto \beta^{(\ell)} J^{(\ell)}(v | q) + I^{(\ell-1)}(q).
\]
We conclude with a simple application of
the contraction principle. 

For $(ii)$, conditionally on $v^{(L-1)}_{(n)}(\cX)=q$, the output $f^{(L)}_{(n)} (\cX)$ consists of $d_{\out}$ i.i.d.\ $k$-dimensional Gaussian random variables, each with zero mean and covariance matrix $q$, i.e., 
\[
\frac1{\sqrt n}f^{(L)}(\cX) \stackrel{law}{=}   \bra{ \frac{  \sqrt{q}N_i }{\sqrt{n} }}_{i=1, \ldots, d_{\out} }.
\]
Recalling Example~\ref{ex:gaussian-chaganty} (slightly extended to cover the case $k \ge 1$) we obtain \eqref{eq:rate-out-finite} concluding the proof. 
\end{proof}

We now explain how the finite-input large deviation principle established in the previous proposition can be lifted to a functional large deviation principle for the entire process, thus completing the proof of Theorem~\ref{thm:main}. The argument follows the classical projective limit approach based on the Dawson--G\"artner theorem combined with exponential tightness.

Let $\cX \subseteq \R^{d_{\din}}$ denote the compact input set, and, for each finite subset $\cX_0 \subseteq \cX$, let $\pi_{\cX_0}$ be the canonical projection mapping the output process to its restriction on $\cX_0$. We have shown that, for every such finite set $\cX_0$, the projected process $\pi_{\cX_0}(Z_n)$,  where $Z_n=\frac{1}{\sqrt{n}} f^{(L)}_{(n)}$, $n\geq 1$, satisfies a large deviation principle with a rate function $I^{(L)}_{\cX_0, \out}$ defined in \eqref{eq:rate-out-finite}. By contraction principle, this entails automatically that these rate functions are consistent under further projections. In addition, exponential tightness of 
the family $(Z_n)_{n=1}^{\infty}$ in the function space $C(\cX; \R^k)$ has been established in Section~\ref{sec:exp-tight}, so that Proposition \ref{prop:dawson} applies and yields a large deviation principle for the sequence $(Z_n)_n$ with speed $n$ and rate function given by
\begin{equation}\label{eq:rate-functional}
I_{\cX, \out}^{(L)} (z) = \sup_{\cX_0 \subset\cX \text{finite}} I_{\cX_0, \out}^{(L)}\bra{\pi_{\cX_0}(z)}.
\end{equation}

\section{Acknowledgements}

C.M.\ and B.P.\ acknowledge the partial support of MUR Excellence Department Project awarded to the Department of Mathematics, University of Rome Tor Vergata (CUP E83C23000330006), and of University of Rome Tor Vergata (CUP E83C25000630005) Research Project METRO. 
D.T.\ acknowledges the MUR Excellence Department Project awarded to the Department of Mathematics, University of Pisa, CUP I57G22000700001, the HPC Italian National Centre for HPC, Big Data and Quantum Computing - Proposal code CN1 CN000\-00\-013, CUP I53C22000690001, the PRIN 2022 Italian grant 2022{-}WHZ5XH - ``understanding the LEarning process of QUantum Neural networks (LeQun)'', CUP J53D23003890006, the project  G24-202 ``Variational methods for geometric and optimal matching problems'' funded by Università Italo Francese.  D.T.\ and K.P.\ acknowledge the partial support of the project PNRR - M4C2 - Investimento 1.3, Partenariato Esteso PE00000013 - ``FAIR - Future Artificial Intelligence Research'' - Spoke 1 ``Human-centered AI'', funded by the European Commission under the NextGeneration EU programme. 
G.L.T. \ acknowledges the financial support of ERA4TB project,
grant agreement ID: 853989, and Praesiidium project, HORIZON-HLTH-2022-STAYHLTH-02.

This research benefitted from the support of INdAM-GNAMPA and the FMJH Program Gaspard Monge for optimization and operations research and their interactions with data science. 

\appendix 

\section{Proof of Theorem~\ref{thm:schied}}\label{app:schied}

First, we recall the following quantitative version of Kolmogorov's continuity theorem, whose proof is obtained by constant chasing in the standard chaining-based argument, see e.g.\ \cite[Chapter I, Theorem 2.1]{revuz2013continuous}, or \cite[Theorem 14.1.1]{marcus2006markov}.

\begin{theorem}
\label{thm:kolmogorov}
Let $(Z(x))_{x\in[0,1]^d}$ be an $\R^k$-valued continuous random field and suppose that there exist $\bar n\geq 1$, $\kappa,\alpha>0$ such that
\[
\|Z(x)-Z(y)\|_{L^n}\leq \kappa |x-y|^{\alpha},\quad\text{$\forall$ $n\geq \bar n$,  $x,y\in [0,1]^d$.}
\]
Then,  for each $\alpha'\in (0,\alpha)$,  there exist $n'=n'(\alpha,\alpha',d)$ and
a constant $\cost=\cost (\alpha,\alpha',\kappa,d)\in (0,\infty)$ such that
\[
\|M_{\alpha'}\|_{L^n}\leq\cost,\quad\text{$\forall$ $n\geq\max\{\bar n,n'\}$}
\]
where
\[
M_{\alpha'}:=\sup_{x,y\in [0,1]^d,x\neq y}\frac{|Z(x)-Z(y)|}{|x-y|^{\alpha'}}.
\]
A close inspection of the proof yields
\[
n':=\lceil d/(\alpha-\alpha')\rceil\quad\text{and}\quad\cost:=\frac{\kappa 2^{\alpha'+1+d}}{1-2^{-(\alpha-\alpha'-\frac{d}{n'})}},
\]
where $\lceil\cdot\rceil$ denotes the ceiling function.
\end{theorem}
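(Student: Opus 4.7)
The plan is to carry out the classical dyadic chaining proof of Kolmogorov's continuity theorem, making every constant explicit so as to recover the claimed expressions for $n'$ and $\cost$.

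First, for each $m \ge 0$ I would introduce the dyadic grid $D_m = \{k/2^m : 0 \le k \le 2^m\}^d \subseteq [0,1]^d$ and the generation-$m$ nearest-neighbor increment
\[
K_m := \max \abs{Z(x + e_i/2^m) - Z(x)},
\]
the maximum being taken over all $x\in D_m$ and $i \in \{1,\ldots, d\}$ such that $x + e_i/2^m\in [0,1]^d$, where $(e_i)_{i=1}^d$ is the canonical basis of $\R^d$. The collection has cardinality at most $d\,2^{md}$, so a union bound on $K_m^n$ combined with the moment assumption yields
\[
\nor{K_m}_{L^n} \le \bra{d\,2^{md}}^{1/n}\kappa\,2^{-m\alpha} \le d^{1/n}\kappa\,2^{-m(\alpha - d/n)},
\quad\text{for every $n\ge \bar n$.}
\]

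Second, I would establish the deterministic chaining inequality: for every $x,y \in [0,1]^d$ with $|x-y|_\infty \in (2^{-(m_0+1)}, 2^{-m_0}]$,
\[
|Z(x)-Z(y)| \le C_d \sum_{m\ge m_0} K_{m+1},
\]
for some explicit constant $C_d$, arguing first on the dense set of dyadic rationals and then extending by continuity of $Z$. The usual telescoping $Z(x) = Z(x_{m_0}) + \sum_{m\ge m_0}(Z(x_{m+1})-Z(x_m))$ along successive dyadic approximations $x_m\in D_m$ to $x$ (and analogously for $y$) reduces each single-level increment $|Z(x_{m+1})-Z(x_m)|$ to at most $d$ nearest-neighbor hops in $D_{m+1}$, each controlled by $K_{m+1}$. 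Dividing by $|x-y|^{\alpha'}\ge 2^{-(m_0+1)\alpha'}$ and taking the supremum over $m_0\ge 0$ then gives
\[
M_{\alpha'} \le 2^{\alpha'+1}C_d\sum_{m\ge 0} 2^{m\alpha'}K_{m+1}.
\]
The main obstacle of the proof lies precisely at this step, since the telescoping must be arranged so that only the nearest-neighbor increments controlled by $K_{m+1}$ appear and the dimensional prefactor ultimately collapses into the form $2^{\alpha'+1+d}$ quoted in the statement.

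Finally, I would take $L^n$ norms, apply Minkowski's inequality, and substitute the bound from the first step to obtain
\[
\nor{M_{\alpha'}}_{L^n} \le 2^{\alpha'+1}C_d\,d^{1/n}\kappa\,2^{-(\alpha-d/n)}\sum_{m\ge 0} 2^{-m(\alpha-\alpha'-d/n)}.
\]
The geometric series converges exactly when $\alpha-\alpha'-d/n>0$, which forces the threshold $n>d/(\alpha-\alpha')$ and motivates the definition $n':=\lceil d/(\alpha-\alpha')\rceil$; for $n\ge\max\{\bar n, n'\}$ the exponent $\alpha-\alpha'-d/n$ is bounded below by $\alpha-\alpha'-d/n'>0$, so the sum is majorized by $(1-2^{-(\alpha-\alpha'-d/n')})^{-1}$, uniformly in $n$. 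Absorbing the remaining factors ($d^{1/n}$, $C_d$, $2^{-(\alpha-d/n)}$ and $2^{\alpha'+1}$) into a single dimensional prefactor of the form $\kappa\,2^{\alpha'+1+d}$ yields the claimed bound on $\nor{M_{\alpha'}}_{L^n}$.
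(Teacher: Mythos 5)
Your proof is correct and follows essentially the same route as the paper's: dyadic chaining with an $L^n$ union bound on the level-$m$ increment maxima, a telescoping estimate for $M_{\alpha'}$ on the dense set of dyadic points, and a geometric series whose convergence forces the threshold $n'=\lceil d/(\alpha-\alpha')\rceil$. The only difference is bookkeeping: you use coordinate-direction nearest-neighbor increments (cardinality $d\,2^{md}$, chaining factor $C_d$) where the paper takes the supremum over pairs of $D_m$ at sup-distance $2^{-m}$ (cardinality $\le 2^{(m+1)d}$, factor $2$), so your explicit constant need not literally collapse to $\kappa\,2^{\alpha'+1+d}\bigl(1-2^{-(\alpha-\alpha'-d/n')}\bigr)^{-1}$, but it has the same dependence on $(\alpha,\alpha',\kappa,d)$, which is all the theorem requires.
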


\begin{proof} 
Since all the norms on $[0,1]^d$ are equivalent, without loss of generality we prove the result considering on $[0,1]^d$ the norm
$|t|=\max_{1\leq i\leq d}|t_i|$. 
 
For $m\in\mathbb N$,  let $D_m$ be the set of $d$-uples $2^{-m}(i_1,\ldots,i_d)$ with 
$i_k\in [0,2^{m})$ and put $D:=\bigcup_m D_m$.  Note that $D$ is dense in $[0,1]^d$ and so by the continuity of the random field we have
\[
\sup_{x,y\in [0,1]^d,x\neq y}\frac{|Z(x)-Z(y)|}{|x-y|^{\alpha'}}=
\sup_{x,y\in D,x\neq y}\frac{|Z(x)-Z(y)|}{|x-y|^{\alpha'}}.
\]
Let $\Delta_m:=\{(x,y)\in D_m^2:\,\,|x-y|=2^{-m}\}$.  It turns out that $|\Delta_m|\leq 2^{(m+1)d}$,  being $|\Delta_m|$ the cardinality of $\Delta_m$.  Setting
\[
K_i:=\sup_{(x,y)\in\Delta_i}|Z(x)-Z(y)|,
\]
we have
\[
K_i^n=\sup_{(x,y)\in\Delta_i}|Z(x)-Z(y)|^n\leq\sum_{(x,y)\in\Delta_i}|Z(x)-Z(y)|^n
\]
and so
\begin{align}
\|K_i\|_{L^n}^n&\leq\sum_{(x,y)\in\Delta_i}\|Z(x)-Z(y)\|_{L^n}^n\nonumber\\
&\leq \kappa^n\sum_{(x,y)\in\Delta_i}|x-y|^{n\alpha}\leq \kappa^n2^{-in\alpha}2^{(i+1)d}=\kappa^n 2^d 2^{-(n\alpha-d)i},\quad\text{$\forall$ $n\geq \bar n$.}\nonumber
\end{align}
Thus
\begin{align}
\|K_i\|_{L^n}&\leq \kappa 2^{d} 2^{-(\alpha-\frac{d}{n})i},\quad\text{$\forall$ $n\geq \bar n$.}\label{eq:2}
\end{align}

Let $x,y\in D$ be such that $|x-y|\leq 2^{-m}$; arguing as in [RY],  Theorem 2.1,  Chapter 1, we have
\begin{equation}\label{eq:1}
|Z(x)-Z(y)|\leq 2\sum_{i=m}^{\infty}K_i.
\end{equation}
Note that for $x,y\in D$ it holds $|x-y|=\max_{1\leq i\leq d}|x_i-y_i|\leq 1$,  and so 
\[
D^2\subseteq\bigcup_{m\geq 0}\widetilde{D}_m
\]
where $\widetilde D_m:=\{(x,y)\in D^2:\,\,2^{-(m+1)}\leq|x-y|\leq 2^{-m}\}$.  Therefore,  using \eqref{eq:1} one has
\begin{align}
M_{\alpha'}&\leq\sup_{m\in\mathbb N}\sup_{x,y\in\widetilde{D}_m:\,\,x\neq y}\frac{|Z(x)-Z(y)|}{|x-y|^{\alpha'}}\nonumber\\
&\leq\sup_{m\in\mathbb N}2^{(m+1)\alpha'}\sup_{|x-y|\leq 2^{-m},\,\,x,y\in D}|Z(x)-Z(y)|\nonumber\\
&\leq\sup_{m\in\mathbb N}2^{(m+1)\alpha'+1}
\sum_{i=m}^{\infty}K_i\nonumber\\
&=\sup_{m\in\mathbb N}2^{\alpha'+1}
\sum_{i=m}^{\infty}2^{m\alpha'}K_i\nonumber\\
&\leq 2^{\alpha'+1}\sum_{i=0}^{\infty}2^{\alpha' i}K_i.\nonumber
\end{align}
Combining this inequality with \eqref{eq:2},  we have
\begin{align}
\|M_{\alpha'}\|_{L^n}\leq2^{\alpha'+1}\sum_{i=0}^{\infty}2^{\alpha' i}\|K_i\|_{L^n}
\leq \kappa 2^{\alpha'+1+d}\sum_{i=0}^{\infty}2^{-(\alpha-\alpha'-\frac{d}{n})i}
\quad\text{$\forall$ $n\geq \bar n$.}
\nonumber
\end{align}
Setting $n':=\lceil d/(\alpha-\alpha')\rceil$,  we finally have
\begin{equation*}
\|M_{\alpha'}\|_{L^n}\leq \kappa 2^{\alpha'+1+d}\sum_{i=0}^{\infty}2^{-(\alpha-\alpha'-\frac{d}{n_1})i}:=\cost,
\quad\text{$\forall$ $n\geq\max\{\bar n,n'\}$.}
\nonumber \qedhere
\end{equation*}
\end{proof}

We use Theorem~\ref{thm:kolmogorov} to prove Theorem~\ref{thm:schied}. 
We recall first that by inequality \eqref{eq:schied-increments}, for every $n \ge \bar n$, we have
$$ \EE\sqa{ \abs{Z_n(x) - Z_n(y)}^{n} } \le \cost^n |x-y|^{n \alpha} \quad \text{for every $x, y \in [0,1]^d$.}$$
Then, for $\alpha'\in (0,\alpha)$, by Theorem~\ref{thm:kolmogorov} there exist $n'=n'(\alpha,\alpha',d)$ and $\cost'=\cost'(\alpha,\alpha',\cost,d)$ such that
\[
\EE\sqa{ \bra{  \sup_{x\neq y \in [0,1]^d } \frac{\abs{Z_n(x)- Z_n(y)}}{|x-y|^{\alpha'} } }^{n} } \le (\cost')^{n},\quad\text{$\forall$ $n\geq \max\{\bar n,n'\}$}.
\]
Moreover, by \eqref{eq:schied-zero} we easily have that  for $n \ge\max\{\bar n, n'\}$, 
$$  \EE\sqa{ \bra{1 + |Z_n(0)|}^{n} } \le \cost^{n}.$$
We put together these bounds with the inequality $1+a+b \le (1+a)(1+b)$, $a,b\ge 0$, to find
\begin{equation*}\begin{split}
& \EE\sqa{ \bra{ 1 +  \abs{Z_n(0)} + \sup_{x\neq y \in [0,1]^d } \frac{\abs{Z_n(x)- Z_n(y)}}{|x-y|^{\alpha'} } }^{n/2} } \\
& \le \EE\sqa{ \bra{ 1 +  \abs{Z_n(0)}}^{n/2} \bra{1+\sup_{x\neq y \in [0,1]^d } \frac{\abs{Z_n(x), Z_n(y)}}{|x-y|^{\alpha'} } }^{n/2} }\\
& \le  \EE\sqa{ \bra{ 1 +  \abs{Z_n(0)}}^{n}}^{1/2} \EE\sqa{\bra{1+\sup_{x\neq y \in [0,1]^d } \frac{\abs{Z_n(x)- Z_n(y)}}{|x-y|^{\alpha'} } }^{n}}^{1/2}\\
& \le[\cost(1+\cost')]^{n/2},\quad\quad\quad\text{$\forall$ $n\ge\max\{\bar n, n'\}$}
\end{split}\end{equation*}
which leads to exponential tightness in the form
$$ \sup_{ n \ge \max\{\bar n,\tilde n\} } \frac 1 {n} \log\EE\sqa{ \psi( Z_n )^{n} } < \infty,$$
where $\psi: C([0,1]^d; \R^k) \to [0, \infty]$ denotes the function
$$ \psi( z ) = \bra{ 1+ \abs{z(0)} +\sup_{x\neq y \in [0,1]^d } \frac{\abs{z(x)- z(y)}}{|x-y|^{\alpha'}}}^{1/2} $$ 
which has compact sub-level sets (with respect to the uniform topology) as a consequence of Arzel\`a-Ascoli theorem (indeed $\psi$ is lower semicontinuous with respect to the sup-norm).

\printbibliography

\end{document}